\font\transp=msbm10 scaled\magstep 1 
\def\transpR{\transp\char'122}
\def\nameuse#1{\csname#1\endcsname}
\def\Rmv[#1]{\nameuse{@ifnextchar} [{\Rmat{#1}}{\Rvec{#1}}}
\def\Rvec#1{\mbox{\transpR}^{#1}}
\def\Rmat#1[#2]{\mbox{\transpR}^{{#1} \times {#2}}}
\def\@begintheorem#1#2{\par\bgroup{\bf #1\ #2. }\it\ignorespaces}
\def\@opargbegintheorem#1#2#3{\par\bgroup{\bf #1\ #2\ (#3). }\it\ignorespaces}
\def\@endtheorem{\egroup}
\begin{document}
\title{ A conjugate gradient-based algorithm for  large-scale quadratic programming problem with one quadratic constraint}
\titlerunning{}
\author{A. Taati,  M. Salahi }
\institute{A. Taati \and M. Salahi \at
              Faculty of Mathematical Sciences, University of Guilan, Rasht, Iran \\
              Tel.: +98-133333901\\
              Fax: +98-1333333509\\
              \email{ akramtaati@yahoo.com,salahim@guilan.ac.ir}  }         %  \\
%%             \emph{Present address:} of F. Author  %  if needed

\maketitle
\hspace{-1cm}\rule{\textwidth}{0.2mm}

%%%%%%%%%%%%%%%%%%%%%%%%%%%%%%%%%%%%%%%%%%%%%%%%%%
\begin{abstract}
In this paper, we  consider the nonconvex quadratically constrained quadratic programming (QCQP) 
with one quadratic constraint. By employing the conjugate gradient method, an  efficient  algorithm  is proposed to solve QCQP that exploits the sparsity of the involved matrices and solves the problem via solving a sequence of positive definite system of linear equations after identifying suitable generalized eigenvalues.  Some numerical
experiments are given to show the effectiveness of the proposed method and to compare it
with some recent algorithms in the literature.
\end{abstract}\\ \\
{\bf Keywords} QCQP,  Conjugate gradient algorithm, Generalized eigenvalue problem.

%%%%%%%%%%%%%%%%%%%%%%%%%%%%%%%%%%%%%%%%%%%%%%%%%%
\section{Introduction }
We consider the following quadratically constrained quadratic programming (QCQP)
\begin{align}\label{1}
\min \quad q(x):=&x^TAx+2a^Tx \notag\\
g(x):=&x^TBx+2b^Tx+\beta \leq 0, \tag{QCQP}
\end{align}
where $A,B\in \mathbb{R}^{n\times n}$ are symmetric matrices with no definiteness assumed, $a, b\in \mathbb{R}^n$ and $\beta \in \mathbb{R}$.
 When $B=I$, $b=0$ and $\beta <0$,  QCQP reduces to the classical trust-region subproblem (TRS),
  which arises in regularization or trust-region methods for unconstrained  optimization \cite{Conn,8}. Despite being nonconvex,
  numerous  efficient algorithms have been developed to solve   TRS  \cite{lanczos,trs1,trs2,trsei,trs4}. The existing algorithms for TRS can be  classified into two categories;  approximate methods and accurate methods. The Steihaug-Toint algorithm is a well-known  approximate method that
exploits  the preconditioned conjugate gradient iterations for unconstrained minimization of $q(x)$ to obtain an approximate solution to large-scale instances of TRS\cite{Steihaug,toint}.
Precisely, the method follows a piecewise linear path connecting the conjugate gradient iterates for solving the system $Ax=-a$, which either finds an interior solution of TRS  or terminates with  a  point on the boundary which does not allow  the accuracy of the constrained
solution to be specified.   To over come the  lack of accuracy of Steihaug-Toint algorithm, Gould et al. in \cite{lanczos} proposed the generalized Lanczos trust-region  method which minimizes the problem on an expanding sequence of subspaces generated by Lanczos vectors.
 The  classical  algorithm \cite{moretrs} by Mor\'{e} and Sorensen is  an accurate method for TRS, which
 at each iteration makes use of the Cholesky factorization to solve a positive definite linear system and hence is not proper for large-scale instances. A number of algorithms designed for large-scale TRS reformulate the problem as a parameterized eigenvalue problem \cite{trsei,trs2,trsei3}. More recently, it has been shown that TRS can be solved by one generalized eigenvalue problem \cite{trs4}.

 The QCQP is a natural generalization of TRS  and is important in scaled trust-region methods for nonlinear optimization, allowing a possible indefinite scaling matrix. It also
has applications  in double well potential problems \cite{do} and compressed sensing for geological data \cite{do1}.
In recent years,  QCQP has received much attention in the literature and various methods  have been developed to solve it \cite{M16,new,ben,so,wol,salahi}.
In  \cite{M16},  Mor\'{e} gives a
  characterization of the global minimizer of  QCQP and describes
 an  algorithm for the solution of QCQP which extends the one for TRS \cite{moretrs}.
A connection between the solution
methods for QCQP and semidefinite programming (SDP) is
established in \cite{new}. However, the SDP approach is not practical for large-scale problems.
Recently, it has been  shown that when the quadratic forms are simultaneously diagonalizable, QCQP  has a  second order cone programming  (SOCP) reformulation which is significantly more tractable than a semidefinite problem \cite{ben}. In \cite{so}, the authors  further showed that QCQP with
an optimal value bounded from below is  SOCP representable, which extends the results in \cite{ben}.  The QCQP with the two-sided constraint has been studied in \cite{wol}. In particular, it has been shown that QCQP can be transformed into a
  parameterized generalized
eigenvalue problem \cite{wol}.  Salahi and Taati \cite{salahi} also
derived a diagonalization-based algorithm under the simultaneous diagonalizable condition of the
quadratic forms. The method  is  proper for small and medium-scale instances since it  needs matrix factorization.
  Recently,  Adachi and Nakatsukasa
developed an algorithm for
QCQP that requires finding just one eigenpair of a generalized eigenvalue problem \cite{qeig}, extending the one for TRS \cite{trs4}.
 However, due to the requirement of constructing explicit form of the Hessian matrices, when the involved matrices are not highly sparse, the method is not efficient for large-scale problems.
 Most recently, in \cite{novel}, the authors have derived a novel convex quadratic reformulation for QCQP and established that the optimal solution of QCQP can be recovered from the optimal solution of the new reformulation. They also developed two efficient algorithms to solve the new reformulation.

In the present paper, we propose a conjugate gradient-based algorithm for QCQP
which applies the conjugate gradient method to  solve  a sequence of positive definite system of linear equations, and hence  is efficient for large sparse instances of QCQP.
 It is worth noting that  although  the method of \cite{M16} and  ours  follow the same framework to solve the problem
 in the case where the optimal
Lagrangian Hessian matrix is positive definite,
  there are some differences in our approach. Firstly, in \cite{M16}, no specific algorithm is proposed to solve the so-called  secular equation while here we propose an efficient algorithm to solve it. Secondly,  our approach first verifies hard case (case 2) and if it is detected, the optimal solution of QCQP is computed via solving a positive definite system. We also emphasize that our approach for verifying hard case 2 is different from the one presented in \cite{wol}.

The rest of the paper is organized as follows. In Section 2, we review some known facts
 related to QCQP.  A conjugate-gradient based  algorithm is introduced in Section 3. Finally, in Section 4, we give some numerical results to show the effectiveness of the proposed method comparing with the algorithms from \cite{qeig} and \cite{novel}.\\ \\
 {\bf Notations:}  Throughout this paper, for two symmetric matrices  $A$ and $B$, $\lambda_{\text{min}}(A,B)$ denotes the smallest generalized eigenvalue
of  pencil $(A,B)$, $A \succ 0(A\succeq 0) $ denotes $A$ is
positive definite (positive semidefinite),
 $A^{\dag}$ denotes the Moore-Penrose generalized inverse of $A$, $\text{Range}(A)$  and $\text{Null}(A)$ denote
its Range and Null spaces, respectively. Finally, for an interval $I$,  $\text{int}(I )$ stands for
 its interior.

\section{Preliminaries: optimality, easy and hard cases}
In this section, we state some results related to the QCQP that are useful in the next section. We start by considering the following assumptions.\\
{\bf Assumption 1.}
 QCQP satisfies the Slater's condition, i.e., there exists $\hat{x}$ with $g(\hat{x})<0$.\\
 {\bf Assumption 2.} There exists $\hat{\lambda}\geq 0$ such that $A+\hat{\lambda} B\succ 0$.\\
When  Assumption 1 is violated,   QCQP reduces to an unconstrained minimization  problem and Assumption 2 ensures that  QCQP has an optimal solution \cite{qeig}.
The following theorem gives a set of necessary and sufficient conditions for the global optimal solution  of QCQP under Assumption 1.\\
\begin{theorem}[\cite{M16}]\label{ttt1}
Suppose that QCQP satisfies Assumption 1. Then $x^{*}$ is a global optimal solution if and only if there exists $\lambda^{*}\geq 0$ such that
\begin{align}
&(A+\lambda^{*} B)x^{*}=-(a+\lambda^{*}b),\label{op1}\\
&g(x^{*})\leq 0,\label{op2}\\
& \lambda^{*} g(x^{*})=0,\label{op3}\\
&(A+\lambda^{*}B)\succeq 0.\label{op4}
\end{align}
\hspace{11.5cm} $\Box$
\end{theorem}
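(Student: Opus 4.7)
\textbf{Proof plan for Theorem \ref{ttt1}.} The plan is to prove the two implications separately. The sufficiency direction ``($\Leftarrow$)'' is a direct duality argument that exploits convexity of the Lagrangian furnished by (\ref{op4}); the necessity direction ``($\Rightarrow$)'' is the substantive part, and the main obstacle is extracting the global semidefiniteness condition (\ref{op4}), which I would obtain via the S-lemma (Yakubovich) applied under Slater's condition.

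\textbf{Sufficiency.} Assume $x^{*}$ and $\lambda^{*}\ge 0$ satisfy (\ref{op1})--(\ref{op4}). Consider the Lagrangian $L(x,\lambda^{*})=q(x)+\lambda^{*}g(x)$, whose Hessian is $2(A+\lambda^{*}B)\succeq 0$ by (\ref{op4}); thus $L(\cdot,\lambda^{*})$ is a convex quadratic. By (\ref{op1}), $\nabla_{x}L(x^{*},\lambda^{*})=2(A+\lambda^{*}B)x^{*}+2(a+\lambda^{*}b)=0$, so $x^{*}$ is an unconstrained global minimizer of $L(\cdot,\lambda^{*})$. For any feasible $y$ (i.e.\ $g(y)\le 0$), using $\lambda^{*}\ge 0$ and complementary slackness (\ref{op3}),
\begin{equation*}
q(y)\;\ge\;q(y)+\lambda^{*}g(y)\;=\;L(y,\lambda^{*})\;\ge\;L(x^{*},\lambda^{*})\;=\;q(x^{*})+\lambda^{*}g(x^{*})\;=\;q(x^{*}),
\end{equation*}
together with $g(x^{*})\le 0$ from (\ref{op2}), this shows $x^{*}$ is globally optimal.

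\textbf{Necessity.} Assume $x^{*}$ is a global minimizer of QCQP. Then the implication ``$g(x)\le 0\Rightarrow q(x)\ge q(x^{*})$'' holds for every $x\in\R^{n}$. Since Assumption~1 supplies a strictly feasible point $\hat{x}$ with $g(\hat{x})<0$, the S-lemma applies and yields $\lambda^{*}\ge 0$ such that
\begin{equation*}
q(x)-q(x^{*})+\lambda^{*}g(x)\;\ge\;0\qquad\text{for all }x\in\R^{n}.
\end{equation*}
From this single inequality I would extract all four KKT-type conditions. Feasibility (\ref{op2}) is just $g(x^{*})\le 0$. Plugging $x=x^{*}$ gives $\lambda^{*}g(x^{*})\ge 0$; combined with $\lambda^{*}\ge 0$ and $g(x^{*})\le 0$ this forces (\ref{op3}). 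Complementary slackness then shows the left-hand side vanishes at $x^{*}$, so $x^{*}$ is a global minimizer of the quadratic $x^{T}(A+\lambda^{*}B)x+2(a+\lambda^{*}b)^{T}x+\text{const}$ on $\R^{n}$; the first-order condition yields (\ref{op1}), and the fact that an unconstrained quadratic is bounded below forces its Hessian to be positive semidefinite, giving (\ref{op4}).

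\textbf{Where the difficulty lies.} The routine Lagrangian/KKT machinery, even under Slater's constraint qualification, only gives the first three conditions together with a local second-order condition on the tangent cone---it does not deliver the global semidefiniteness (\ref{op4}). The whole force of the theorem lies in this global condition, and the clean way to obtain it is precisely the S-lemma, which is the nontrivial quadratic analogue of strong duality and is the step that genuinely uses both Slater's condition and the fact that only \emph{one} quadratic constraint is present.
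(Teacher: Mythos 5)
Your argument is correct: the sufficiency part is the standard convex-Lagrangian estimate (note it needs neither Slater's condition nor the single-constraint structure), and the necessity part, via the S-lemma under the strict feasibility supplied by Assumption~1, does deliver all four conditions, including the global condition (\ref{op4}), exactly as you describe. There is, however, nothing in the paper to compare it with line by line: Theorem~\ref{ttt1} is stated with a citation to Mor\'{e} \cite{M16} and no proof is reproduced. Relative to Mor\'{e}'s original treatment, your route is genuinely different in organization: Mor\'{e} derives the characterization from first principles about quadratic functions (the analysis behind his generalization of the Mor\'{e}--Sorensen optimality conditions), whereas you outsource the hard step to the S-lemma. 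That is legitimate and makes the proof short and modular, but be aware of what it costs: for one quadratic constraint the S-lemma is essentially \emph{equivalent} in depth to the theorem itself (both rest on the hidden convexity of the joint range of two quadratics, e.g.\ Dines' theorem), so your proof is only as self-contained as your willingness to cite Yakubovich; a referee could reasonably ask you either to cite a precise nonhomogeneous version of the S-lemma with the Slater hypothesis you use, or to prove it. Your closing remark correctly identifies (\ref{op4}) as the part that ordinary KKT theory cannot give, and your extraction of (\ref{op1})--(\ref{op4}) from the single inequality $q(x)-q(x^{*})+\lambda^{*}g(x)\ge 0$ (evaluate at $x^{*}$ to get complementarity, then read off stationarity and boundedness below of the resulting quadratic) is exactly right.
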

An optimal  solution of  \ref{1} belongs to one of the following two types:  an interior solution with $g(x^{*})<0$ or a boundary solution with $g(x^{*})=0$.
The following lemma states the case where  QCQP has no boundary solution.
\begin{lemma}\label{ll1}
The QCQP has no  optimal solution on the boundary of the feasible region if and only if $A\succ 0$ and $g(-A^{-1}a)<0$.
\end{lemma}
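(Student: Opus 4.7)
The plan is to prove both directions using Theorem \ref{ttt1}.

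For the ($\Leftarrow$) direction, I assume $A \succ 0$ and $g(-A^{-1}a)<0$. Then $\tilde{x} := -A^{-1}a$ is the unique unconstrained minimizer of the strictly convex quadratic $q$, and by hypothesis it is strictly feasible. Setting $\lambda^*=0$, the pair $(\tilde{x},0)$ satisfies all four optimality conditions \eqref{op1}--\eqref{op4}, so $\tilde{x}$ is a global solution with $g(\tilde{x})<0$. For any other feasible point $\bar{x}\neq \tilde{x}$, strict convexity gives $q(\bar{x})>q(\tilde{x})$, so no boundary point can be optimal.

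For the ($\Rightarrow$) direction, by Assumption 1 and Assumption 2 the problem admits a global optimal solution $x^*$. Since by hypothesis $x^*$ cannot lie on the boundary, we have $g(x^*)<0$, and then \eqref{op3} forces $\lambda^*=0$. Conditions \eqref{op1} and \eqref{op4} reduce to $Ax^*=-a$ and $A\succeq 0$. It remains to upgrade $A\succeq 0$ to $A\succ 0$; this is the main obstacle, and it is where Assumption 2 plays the decisive role. Suppose for contradiction that $A$ is singular and pick $v\neq 0$ with $Av=0$. A direct expansion using $Ax^*=-a$ gives
\begin{equation*}
q(x^*+tv)=q(x^*)+2t\,v^{T}(Ax^*+a)+t^{2}v^{T}Av=q(x^*)\qquad\text{for every }t\in\mathbb{R}.
\end{equation*}
By Assumption 2 there is $\hat\lambda\geq 0$ with $A+\hat\lambda B\succ 0$; applying this to $v$ yields $\hat\lambda\, v^{T}Bv>0$. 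Since $A$ is singular we must have $\hat\lambda>0$, hence $v^{T}Bv>0$. Therefore $g(x^*+tv)$ is a quadratic in $t$ with positive leading coefficient and value $g(x^*)<0$ at $t=0$, so there exists $t_{0}\neq 0$ with $g(x^*+t_{0}v)=0$. The point $x^*+t_{0}v$ is then a boundary global optimum of QCQP, contradicting the hypothesis.

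Hence $A\succ 0$, $x^*=-A^{-1}a$ is uniquely determined, and $g(-A^{-1}a)=g(x^*)<0$, completing the equivalence.
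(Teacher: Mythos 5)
Your proof is correct and follows essentially the same route as the paper: the optimality conditions of Theorem \ref{ttt1}, combined with the null-space direction argument in which Assumption 2 yields $v^{T}Bv>0$ for $v\in\text{Null}(A)$ and hence produces a boundary optimum when $A$ is singular, with the positive definite case settled by the unconstrained minimizer $-A^{-1}a$. The only notable difference is that you spell out the invariance $q(x^{*}+tv)=q(x^{*})$, which the paper's proof uses implicitly.
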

\begin{proof}
Let $x^{*}$ be the optimal solution of  QCQP. If $A\not \succeq 0$, then from Theorem \ref{ttt1}, $g(x^{*})=0$ and hence the optimal solution is on the boundary of the feasible region. Now suppose that $A$ is positive semidefinite (singular) and $g(x^{*})<0$. Let $v\in \text{Null}(A)$ and consider the following quadratic equation of variable $\alpha$:
$$g(x^{*}+\alpha v)=(x^{*}+\alpha v)^TB(x^{*}+\alpha v) +2b^T(x^{*}+\alpha v)+\beta=0.$$
This equation has a root since  $v^TBv>0$ and $g(x^{*})<0$. To show $v^TBv>0$, recall that, by Assumption 2, there exists $\hat{\lambda}\geq 0$ with $A+\hat{\lambda}B\succ 0$. This implies that $v^TAv+\hat{\lambda}v^TBv>0$. Since $v^TAv=0$, then $v^TBv>0$. The discussion above proves that in the case where $A$ is positive semidefinite (singular), QCQP has a solution on the boundary. The only case that must be considered is the case where $A$  is positive definite. In this case, $x^{*}=-A^{-1}a$ is the unique unconstrained minimizer of $q(x)$. Hence, QCQP has no solution with $g(x^{*})=0$ if and only if $g (-A^{-1}a)<0$. \hspace{9.5 cm} $\Box$
\end{proof}
In view of Lemma \ref{ll1}, unless $A\succ 0$ and $g(-A^{-1}a)<0$,  QCQP has a boundary solution. Hence, from now on, we focus on the boundary solutions since an interior solution that can be obtained by solving the linear system $Ax=-a$.

By Assumption 2, $A$ and $B$ are simultaneously diagonalizable \cite{co}, i. e., there exists an invertible matrix $Q$  and diagonal
matrices $ D=\text{diag}(d_1,\cdots,d_n)$ and $E=\text{diag}(e_1,\cdots,e_n)$ such that
$Q^T AQ = D$ and $ Q^T  BQ = E$. It is easy to see that $A+\lambda B\succeq 0$ if and only if $\underline{\lambda}\leq \lambda \leq \bar{\lambda}$ where
$$\underline{\lambda}=\max\{-\frac{d_i}{e_i}|e_i>0\}, \quad \bar{\lambda}=\min\{-\frac{d_i}{e_i}|e_i<0\}.$$
If $A\succ 0$, $\underline{\lambda}<0$ else $\underline{\lambda}\geq 0$.
Let $I$ be the smallest interval containing all nonnegative $\lambda$ satisfying   the optimality conditions (\ref{op1}) and (\ref{op4}). It follows from Theorem \ref{ttt1} that the interior of $I$,   $\text{int}(I)=(\max\{0,\underline{\lambda}\},\bar{\lambda}) $. The interval $I$  contains $\bar{\lambda}$ as the right  endpoint if $(a+\bar{\lambda}b)\in \text{Range}(A+\bar{\lambda} B)$ and it contains $\underline{\lambda}$ as the left endpoint if $\underline{\lambda}\geq 0$
 and $(a+\underline{\lambda}b)\in \text{Range}(A+\underline{\lambda} B)$.
For any $\lambda\in (\underline{\lambda},\bar{\lambda})$, define
\begin{align*}
\phi(\lambda):=x(\lambda)^T B x(\lambda)+2b^Tx(\lambda)+\beta,
\end{align*}
where $x(\lambda)=-(A+\lambda B)^{-1}(a+\lambda b)$.
The function $\phi(\lambda)$ has the following properties.
\begin{lemma}[\cite{M16}]\label{llll1}
$\phi(\lambda)$ is either constant or strictly decreasing on  $(\underline{\lambda},\bar{\lambda})$. $\phi(\lambda)$ is constant if and only if
\begin{align}\label{12}
 \begin{bmatrix}a\\b
\end{bmatrix}\in\text{Range}
\begin{bmatrix}
A\\B
\end{bmatrix}.
\end{align}
%\hspace{11.5 cm} $\Box$
\end{lemma}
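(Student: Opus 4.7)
The plan is to reduce the dichotomy to a sign analysis of $\phi'(\lambda)$ on the open interval $(\underline\lambda,\bar\lambda)$, where $A+\lambda B\succ 0$, and then to turn the vanishing condition for $\phi'$ into the range statement \eqref{12}.

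First I would compute $x'(\lambda)$ implicitly: differentiating $(A+\lambda B)x(\lambda)=-(a+\lambda b)$ gives $(A+\lambda B)x'(\lambda)=-(b+Bx(\lambda))$, hence
\[
x'(\lambda)=-(A+\lambda B)^{-1}(Bx(\lambda)+b).
\]
Using this in $\phi'(\lambda)=2x'(\lambda)^T(Bx(\lambda)+b)$ yields the key identity
\[
\phi'(\lambda)=-2\,(Bx(\lambda)+b)^T(A+\lambda B)^{-1}(Bx(\lambda)+b).
\]
Since $(A+\lambda B)^{-1}\succ 0$ on $(\underline\lambda,\bar\lambda)$, this immediately gives $\phi'(\lambda)\le 0$, with equality at $\lambda$ if and only if $Bx(\lambda)+b=0$.

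Next I would translate the pointwise equality $Bx(\lambda_0)+b=0$ at some $\lambda_0\in(\underline\lambda,\bar\lambda)$ into the range condition. Setting $y:=x(\lambda_0)$, the relation $By=-b$ combined with $(A+\lambda_0 B)y=-(a+\lambda_0 b)$ yields $Ay=-a$; thus $\begin{bmatrix}a\\b\end{bmatrix}= -\begin{bmatrix}A\\B\end{bmatrix}y \in \text{Range}\begin{bmatrix}A\\B\end{bmatrix}$, which is \eqref{12}. Conversely, if \eqref{12} holds, pick $y$ with $Ay=-a$ and $By=-b$; then $(A+\lambda B)y=-(a+\lambda b)$ for every $\lambda$, and by the invertibility of $A+\lambda B$ on the open interval we get $x(\lambda)\equiv y$, so $Bx(\lambda)+b\equiv 0$ and $\phi'(\lambda)\equiv 0$.

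Combining these: if \eqref{12} holds then $\phi$ is constant; if \eqref{12} fails, then $\phi'(\lambda)<0$ at every $\lambda$ (because equality at any single point would force \eqref{12}), so $\phi$ is strictly decreasing on $(\underline\lambda,\bar\lambda)$. The main obstacle, though a mild one, is the second implication: one must be careful not to settle for $\phi'\le 0$ a.e.\ and conclude only non-increase; the observation that a single zero of $\phi'$ already yields \eqref{12} is what upgrades the conclusion to strict monotonicity.
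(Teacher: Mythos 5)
Your proof is correct, and it is essentially the standard argument behind this result, which the paper itself does not prove but simply cites from Mor\'{e} \cite{M16}: compute $x'(\lambda)$ implicitly, obtain $\phi'(\lambda)=-2(Bx(\lambda)+b)^T(A+\lambda B)^{-1}(Bx(\lambda)+b)\le 0$ using $A+\lambda B\succ 0$ on $(\underline{\lambda},\bar{\lambda})$, and observe that vanishing of $\phi'$ at one point forces $Ax(\lambda)= -a$, $Bx(\lambda)=-b$, i.e.\ condition (\ref{12}), which conversely makes $x(\lambda)$ constant. Your closing remark correctly identifies the only delicate point, namely that a single zero of $\phi'$ already yields (\ref{12}), which upgrades monotonicity to the strict dichotomy claimed in the lemma.
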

 \begin{proposition}[\cite{M16,qeig}]\label{ppr1}
    Suppose that $\bar{\lambda}=+\infty$. Then $\lim_{\lambda\rightarrow +\infty}\phi(\lambda)<0$.
    \end{proposition}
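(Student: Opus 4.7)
My plan is to route the argument through the Lagrangian dual value
\[
h(\lambda) := \min_{x \in \R^n}\bigl\{q(x) + \lambda g(x)\bigr\},
\]
and combine its concavity with Slater's condition. Since $\bar\lambda = +\infty$, the matrix $A+\lambda B$ is positive definite for every $\lambda \in (\underline\lambda,+\infty)$, so the minimum defining $h(\lambda)$ is attained at $x(\lambda) = -(A+\lambda B)^{-1}(a+\lambda b)$ and equals $h(\lambda) = q(x(\lambda)) + \lambda\,\phi(\lambda)$. As the pointwise minimum over $x$ of functions affine in $\lambda$, $h$ is concave on $(\underline\lambda,+\infty)$, and, since $x(\cdot)$ is smooth on this range, the envelope theorem gives $h'(\lambda) = g(x(\lambda)) = \phi(\lambda)$.

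\textbf{Key steps.} First, I would invoke Assumption 1 to fix some $\hat x$ with $g(\hat x) < 0$; by the very definition of $h$ as an infimum,
\[
h(\lambda) \leq q(\hat x) + \lambda\, g(\hat x) \qquad \text{for all } \lambda \geq 0.
\]
Second, I would use concavity of $h$: for any fixed $\lambda_0 \in (\underline\lambda,+\infty)$ and any $\lambda > \lambda_0$,
\[
\phi(\lambda) = h'(\lambda) \leq \frac{h(\lambda) - h(\lambda_0)}{\lambda - \lambda_0} \leq \frac{q(\hat x) + \lambda\, g(\hat x) - h(\lambda_0)}{\lambda - \lambda_0}.
\]
Letting $\lambda \to +\infty$, the right-hand side tends to $g(\hat x) < 0$. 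Lemma \ref{llll1} ensures $\phi$ is monotone on $(\underline\lambda,+\infty)$, so $\lim_{\lambda\to +\infty}\phi(\lambda)$ exists in $[-\infty,\phi(\lambda_0))$, and the displayed bound then yields $\lim_{\lambda\to +\infty}\phi(\lambda) \leq g(\hat x) < 0$.

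\textbf{Main obstacle.} The delicate point is that we are taking a limit on an unbounded ray rather than on a bounded interval, and this is precisely where the hypothesis $\bar\lambda = +\infty$ is essential, since only then is $x(\lambda)$ defined for arbitrarily large $\lambda$. An alternative route would be to pass to the simultaneous diagonalization of $A$ and $B$ and compute the coordinate-wise limits of $x(\lambda)$, splitting indices according to whether $e_i = 0$ or $e_i > 0$; this works but forces a separate treatment of the sub-case in which $\tilde b_i \neq 0$ for some $i$ with $e_i = 0$ (which drives $\phi(\lambda) \to -\infty$) versus the sub-case $\tilde b_i = 0$ on $\{i : e_i = 0\}$, where the bound $\lim\phi(\lambda) < 0$ has to be recovered from the Slater point anyway. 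The concavity-plus-Slater argument above sidesteps this case split and is what I expect to be the cleanest proof.
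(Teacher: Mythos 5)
Your argument is correct. Note first that the paper itself gives no proof of this proposition: it is quoted from Mor\'{e} and from Adachi--Nakatsukasa, where the standard argument is essentially the ``alternative route'' you sketch at the end --- one observes that $\bar{\lambda}=+\infty$ forces the relevant eigenvalues $e_i$ of the pencil to be nonnegative, passes to the simultaneous diagonalization, and computes the limit of $x(\lambda)$ componentwise, splitting the indices with $e_i=0$ according to whether the corresponding component of the transformed linear term vanishes (divergence of $\phi$ to $-\infty$) or not (convergence of $\phi$ to $\inf_x g(x)\leq g(\hat x)<0$ by Slater). Your primary proof via the dual function $h(\lambda)=\min_x\{q(x)+\lambda g(x)\}$ is a genuinely different and arguably cleaner route: concavity of $h$ is free (pointwise minimum of affine functions), the envelope identity $h'(\lambda)=\phi(\lambda)$ is immediate because $x(\lambda)$ is the unique smooth stationary point when $A+\lambda B\succ 0$ (which holds on all of $(\underline{\lambda},+\infty)$ precisely because $\bar{\lambda}=+\infty$), and the secant bound $\phi(\lambda)\leq\bigl(q(\hat x)+\lambda g(\hat x)-h(\lambda_0)\bigr)/(\lambda-\lambda_0)\to g(\hat x)<0$ delivers the conclusion without any case analysis; existence of the limit follows from the monotonicity in Lemma \ref{llll1} (or directly from $h'$ being nonincreasing). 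The only cosmetic slip is the assertion that the limit lies in $[-\infty,\phi(\lambda_0))$: in the constant case of Lemma \ref{llll1} it equals $\phi(\lambda_0)$, but this does not matter since the strict negativity comes from the displayed bound, not from strict monotonicity. What the dual argument buys is uniformity (no sub-cases, no diagonalization); what the cited computational proof buys is an explicit description of $\lim_{\lambda\to\infty}x(\lambda)$ and of when $\phi(\lambda)\to-\infty$, which is occasionally useful algorithmically but is not needed for the proposition.
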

The QCQP similar to TRS is classified into  easy case and hard case (case 1 and 2) instances.  The characterization of the easy and hard
cases of the QCQP with the two-sided constraint is given in \cite{wol}.  Here we adapt the characterization  for the QCQP based on  the requirement $\lambda^{*}\geq 0$ where $\lambda^{*}$ denotes the optimal Lagrange multiplier. We have to consider separately
the cases where $A$ is positive definite and $A$ is not positive definite as follows:\\
\begin{itemize}
\item  {\bf $A$ is positive definite.}
\begin{enumerate}
\item The easy case occurs if one of the following holds:
\begin{itemize}
\item[(i)]
$\overline{\lambda}$ is infinite.
\item[(ii)] $\overline{\lambda}$ is finite and $a+\overline{\lambda}b \notin  \text{Range}(A+\overline{\lambda}B)$. This implies that $\lambda^{*}\in [0 \, \overline{\lambda})$.
\end{itemize}
\item The hard case 1 occurs if the following  holds:
\begin{itemize}
    \item[(i)]
      $\overline{\lambda}$ is finite and $a+\overline{\lambda}b \in  \text{Range}(A+\overline{\lambda}B)$ and   $\lambda^{*}\in[0 \, \overline{\lambda})$.
     \end{itemize}
\item
The hard case 2 occurs  if the following holds:
\begin{itemize}
    \item[(iii)]$\overline{\lambda}$ is finite and $a+\overline{\lambda}b \in  \text{Range}(A+\overline{\lambda}B)$ and $\lambda^{*}=\overline{\lambda}$.
    \end{itemize}
    \end{enumerate}
    \item {\bf $A$ is not positive definite.}
    \begin{enumerate}
\item The easy case occurs if one of the following three cases holds:
\begin{itemize}
\item[(i)]
Both $\underline{\lambda}$ and $\overline{\lambda}$ are finite,  $a+\underline{\lambda}b \notin \text{Range}(A+\underline{\lambda} B)$ and  $a+\overline{\lambda}b \notin \text{Range}(A+\overline{\lambda} B)$. This implies that $\lambda^{*}\in(\underline{\lambda}, \, \overline{\lambda})$.
\item[(ii)] Only $\underline{\lambda} $ is finite and $a+\underline{\lambda}  b \notin  \text{Range}(A+\underline{\lambda}B)$. This implies that $\lambda^{*}\in(\underline{\lambda}, \, \infty)$.
\item[(iii)] Only $\overline{\lambda}$ is finite and $a+\overline{\lambda}b \notin  \text{Range}(A+\overline{\lambda}B)$. This implies that $\lambda^{*}\in(-\infty, \, \overline{\lambda})$.
\end{itemize}
\item The hard case 1 occurs if one of the following three cases holds:
\begin{itemize}
\item[(i)]
Both $\underline{\lambda}$ and $\overline{\lambda}$ are finite and $a+\underline{\lambda}b \in \text{Range}(A+\underline{\lambda} B)$ or $a+\overline{\lambda}b \in \text{Range}(A+\overline{\lambda} B)$ and $\lambda^{*}\in(\underline{\lambda}, \, \overline{\lambda})$.
\item[(ii)]Only $\underline{\lambda} $ is finite and $a+\underline{\lambda}  b \in \text{Range}(A+\underline{\lambda}B)$ and $\lambda^{*}\in(\underline{\lambda}, \, \infty)$.
    \item[(iii)]
     Only $\overline{\lambda}$ is finite and $a+\overline{\lambda}b \in  \text{Range}(A+\overline{\lambda}B)$ and   $\lambda^{*}\in(-\infty, \, \overline{\lambda})$.
     \end{itemize}
\item
The hard case 2 occurs if one of the following holds:
\begin{itemize}
\item[(i)]Both $\underline{\lambda}$ and $\overline{\lambda}$ are finite and $a+\underline{\lambda}b \in \text{Range}(A+\underline{\lambda} B)$ and $\lambda^{*}=\underline{\lambda}$ or $a+\overline{\lambda}b \in \text{Range}(A+\overline{\lambda}B)$ and $\lambda^{*}=\overline{\lambda}$.
    \item[(ii)]
    Only $\underline{\lambda} $ is finite and $a+\underline{\lambda}  b \in  \text{Range}(A+\underline{\lambda}B)$ and $\lambda^{*}=\underline{\lambda}$.
    \item[(iii)]Only $\overline{\lambda}$ is finite and $a+\overline{\lambda}b \in  \text{Range}(A+\overline{\lambda}B)$ and $\lambda^{*}=\overline{\lambda}$.
    \end{itemize}
    \end{enumerate}
    \end{itemize}
   From Lemma \ref{llll1} and Proposition \ref{ppr1}, we see that in the easy case and hard case 1, $\lambda^{*}$
is the unique solution  of equation $\phi(\lambda)=0$ in  the interval  $(\max\{0,\underline{\lambda}\},\bar{\lambda})$.  Hard case 2 corresponds to the case where  equation $\phi(\lambda)=0$ has no solution in $(\max\{0,\underline{\lambda}\},\bar{\lambda})$.
\section{A conjugate gradient-based algorithm}
In this section,
we assume that a value of $\hat{\lambda}$  such that $A+\hat{\lambda}B\succ 0$ is known. By
Lemma \ref{ll1}, excluding the case where $A$ is positive definite and $g(-A^{-1}a)<0$, solving  QCQP is equivalent to find a nonnegative $\lambda^{*}\in [\max\{0,\underline{\lambda}\},\bar{\lambda}]$ such that $\phi(\lambda^{*})=0$.
   Note that except the case where QCQP is a hard case 2 instance, the nonlinear equation $\phi(\lambda)=0$ has a unique root in interval $(\max\{0,\underline{\lambda}\},\bar{\lambda})$. According to this fact, we propose a conjugate gradient-based algorithm for  QCQP which first checks for hard case 2 and if this is the case, a global optimal solution of QCQP is obtained via solving a positive definite system  of linear equations, otherwise, the global solution of QCQP is computed by finding the root of   equation $\phi(\lambda)=0$ in $(\max\{0,\underline{\lambda}\},\bar{\lambda})$.
We use the value of $\phi(\hat{\lambda})$ to form the algorithm. We consider the following three cases:
\begin{enumerate}
\item {\bf Case 1. }$ \phi(\hat{\lambda})=0$.\\
In this case,  obviously  $(\hat{\lambda},x(\hat{\lambda}))$ satisfies the optimality conditions (\ref{op1}) to (\ref{op4}) and hence $x(\hat{\lambda})$ is the unique optimal solution of QCQP.

\item {\bf Case 2. } $\phi(\hat{\lambda})>0.$\\
In this case, by Lemma \ref{llll1}, $\lambda^{*}\in (\hat{\lambda},\bar{\lambda}]$. Now we consider the following two possible subcases:
\begin{enumerate}
\item {\bf $\bar{\lambda}$ is infinite.} In this case, from Proposition \ref{ppr1},  equation $\phi(\lambda)=0$   has a unique root in $(\hat{\lambda},\bar{\lambda})$.
\item {\bf $\bar{\lambda}$ is finite.} In this case, either QCQP is a hard case 2 instance ($\lambda^{*}=\bar{\lambda}$) or equation $\phi(\lambda)=0$   has a unique root in $(\hat{\lambda},\bar{\lambda})$.
\end{enumerate}
\item {\bf Case 3. } $\phi(\hat{\lambda})<0.$\\
In this case,  by Lemma \ref{llll1}, $\lambda^{*}\in [\max\{0,\underline{\lambda}\},\hat{\lambda})$. Now we consider the following two possible subcases:
\begin{enumerate}
 \item If $\underline{\lambda}<0$, then $A\succ 0$. Set $x^{*}=-A^{-1}a$. If $g(x^{*})\leq 0$, then $x^{*}$ with $\lambda^{*}=0$ is the optimal solution of QCQP. Otherwise,  equation $\phi(\lambda)=0$   has a unique root in $(0,\hat{\lambda})$.
\item If $\underline{\lambda}\geq 0$,  either QCQP is a hard case 2 instance ($\lambda^{*}=\underline{\lambda}$) or  equation $\phi(\lambda)=0$   has a unique root in $(\underline{\lambda},\hat{\lambda})$.
\end{enumerate}
\end{enumerate}
To the best of our knowledge,  there is no  algorithm  in the literature to compute $\underline{\lambda}$ and $\bar{\lambda}$  for general $A$ and $B$ when the value of $\hat{\lambda}$ is not known. However, when the value of $\hat{\lambda}$ is available, $\underline{\lambda}$ and $\bar{\lambda}$
can be efficiently computed via finding some generalized eigenvalues of a matrix pencil \cite{wol,novel}. Precisely,
$A+\lambda B\succeq 0$ if and only if $ \underline{\lambda}\leq \lambda \leq \bar{\lambda}$ where $\underline{\lambda}=\lambda_1+\hat{\lambda}$, $\bar{\lambda}=\lambda_2+\hat{\lambda}$,
$$\lambda_1=\begin{cases}
\frac{1}{\lambda_{\text{min}}(-B,A+\hat{\lambda}B)}& if\,\, \lambda_{\text{min}}(-B,A+\hat{\lambda}B)<0,\\
-\infty & otherwise,
\end{cases}$$
and
$$\lambda_2=\begin{cases}
\frac{1}{\lambda_{\text{min}}(B,A+\hat{\lambda}B)}& if \,\,\lambda_{\text{min}}(B,A+\hat{\lambda}B)<0,\\
\infty & otherwise.
\end{cases}$$
It is worth noting that, when $\hat{\lambda}$ is given, we only need to compute one extreme eigenvalue to determine the initial interval containing the optimal Lagrange multiplier, i.e., only one of $\underline{\lambda}$ and $\bar{\lambda}$.
\subsection{Verifying hard case 2}
In this subsection, we discuss how one can detect  hard case 2 if it is the case.  In \cite{wol}, Pong and Wolkowicz  proposed an algorithm based on minimum generalized eigenvalue of a parameterized matrix pencil to solve  QCQP with the two-sided constraint that first
carries out a preprocessing technique to recognize hard case 2, see Section 3.1.1 of \cite{wol} for more details.  According to their result, when $\underline{\lambda} \in I $ and $\lambda^{*}\in [\underline{\lambda},\hat{\lambda})$, QCQP is hard case 2 if and only if $g(x(\underline{\lambda}))\leq 0$ where
$x(\underline{\lambda})=-(A+\underline{\lambda}B)^{\dagger}(a+\underline{\lambda}b)$. Similarly,
 when  $\bar{\lambda}\in I$ and $\lambda^{*}\in (\hat{\lambda},\bar{\lambda}]$, QCQP is hard case 2 if and only if $g(x(\bar{\lambda}))\geq 0$ where $x(\bar{\lambda})=-(A+\bar{\lambda}B)^{\dagger}(a+\bar{\lambda}b)$. Unfortunately, in general  these assertions are not true,  QCQP may be hard case 2 but $g(x(\underline{\lambda}))> 0$ or $ g(x(\bar{\lambda}))<0$. A problem of this type is given in the following example.
 \begin{example}
 Consider a QCQP with
\begin{align*}
A=\begin{bmatrix}
-1&0\\0&1
\end{bmatrix},\,
B=\begin{bmatrix}
2&0\\0&-1
\end{bmatrix},\,
a=\begin{bmatrix}
-25\\-\frac{33}{2}
\end{bmatrix},\,
b=\begin{bmatrix}
50\\25
\end{bmatrix}, \beta=0,
\end{align*}
and set $\hat{\lambda}=\frac{3}{4}$. It is easy to see that  $\underline{\lambda}=\frac{1}{2}$, $\bar{\lambda}=1$, $\phi(\hat{\lambda})<0$ and $(a+\underline{\lambda}b)\in \text{Range}(A+\underline{\lambda}B)$,  and so $\lambda^{*}\in [\underline{\lambda},\hat{\lambda})$. This problem is hard case 2 since $x^{*}=[-25+\sqrt{457}, 8]^T$ with $\lambda^{*}=\underline{\lambda}$ satisfy the  optimality conditions
\begin{align*}
&(A+\underline{\lambda} B)x^{*}=-(a+\underline{\lambda}b),\\
&g(x^{*})=0.
\end{align*}
On the other hand, we have  $x(\underline{\lambda})=[0,8]^T$ and
$g(x(\underline{\lambda}))=336>0$.
 \end{example}
In what follows, we fill this gap and describe an approach to recognize hard case 2.  We  use the following result.
 \begin{proposition}[\cite{M16}]\label{pcp}
    If $\bar{\lambda}$ is finite, then $v^TBv<0$ for all nonzero  $v\in \text{Null}(A+\bar{\lambda}B) $. If $\underline{\lambda}$ is finite, then $v^TBv>0$ for all nonzero $v\in \text{Null}(A+\underline{\lambda}B) $.
\end{proposition}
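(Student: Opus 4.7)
The plan is to leverage Assumption 2 directly, using the existence of $\hat{\lambda}\ge 0$ with $A+\hat{\lambda}B\succ 0$. The key observation is that $\hat{\lambda}$ must lie strictly inside the PSD interval $[\underline{\lambda},\bar{\lambda}]$: indeed, since $A+\hat{\lambda}B$ is strictly positive definite it is nonsingular, while at a finite endpoint $\bar{\lambda}$ (resp.\ $\underline{\lambda}$) the matrix $A+\bar{\lambda}B$ (resp.\ $A+\underline{\lambda}B$) is positive semidefinite and singular (otherwise one could increase $\bar{\lambda}$ or decrease $\underline{\lambda}$ slightly and keep PSD-ness by a continuity argument). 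Hence $\hat{\lambda}\neq\bar{\lambda}$ and $\hat{\lambda}\neq\underline{\lambda}$, and since the PSD region is an interval containing both $\hat{\lambda}$ and the endpoints, we obtain $\underline{\lambda}<\hat{\lambda}<\bar{\lambda}$.

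With this in hand, the first statement follows from a one-line algebraic identity. Take any nonzero $v\in\mathrm{Null}(A+\bar{\lambda}B)$. Then $v^T(A+\bar{\lambda}B)v=0$, which rewrites as $v^TAv=-\bar{\lambda}\,v^TBv$. On the other hand, since $A+\hat{\lambda}B\succ 0$ and $v\neq 0$,
\begin{equation*}
0<v^T(A+\hat{\lambda}B)v = v^T(A+\bar{\lambda}B)v + (\hat{\lambda}-\bar{\lambda})v^TBv = (\hat{\lambda}-\bar{\lambda})\,v^TBv.
\end{equation*}
Because $\hat{\lambda}-\bar{\lambda}<0$ by the previous paragraph, we must have $v^TBv<0$, as claimed.

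For the second statement, the argument is symmetric: for nonzero $v\in\mathrm{Null}(A+\underline{\lambda}B)$, expanding $v^T(A+\hat{\lambda}B)v$ around $\underline{\lambda}$ gives $0<(\hat{\lambda}-\underline{\lambda})v^TBv$, and since $\hat{\lambda}-\underline{\lambda}>0$ we conclude $v^TBv>0$.

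There is no substantive obstacle here; the only point that requires a small amount of care is the justification that $\hat{\lambda}$ lies strictly between the endpoints. One can establish this either directly, as sketched above, or by invoking the simultaneous diagonalization $Q^TAQ=D$, $Q^TBQ=E$ already used in the paper: in the diagonalized coordinates, $\bar{\lambda}=\min\{-d_i/e_i:e_i<0\}$ and the equality $d_{i^*}+\bar{\lambda}e_{i^*}=0$ at a minimizing index $i^*$ forces singularity of $A+\bar{\lambda}B$, so the strictly PD matrix $A+\hat{\lambda}B$ cannot occur at $\lambda=\bar{\lambda}$. The diagonalization approach has the bonus of making the conclusion $v^TBv<0$ transparent, since any $v$ in the null space is a combination of columns $q_i$ with $e_i<0$ and $v^TBv=\sum\alpha_i^2 e_i<0$; this could be used as a cross-check.
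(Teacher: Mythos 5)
Your proof is correct. Note that the paper itself gives no argument for this proposition --- it is quoted from Mor\'{e} \cite{M16} --- and what you have written is essentially the standard perturbation argument from that source: expand $v^T(A+\hat{\lambda}B)v$ about the endpoint, use $v^T(A+\bar{\lambda}B)v=0$ (resp. $v^T(A+\underline{\lambda}B)v=0$) and strict positive definiteness at $\hat{\lambda}$, and read off the sign of $v^TBv$ from the sign of $\hat{\lambda}-\bar{\lambda}$ (resp. $\hat{\lambda}-\underline{\lambda}$). The only step needing care, strictness of $\underline{\lambda}<\hat{\lambda}<\bar{\lambda}$ at finite endpoints, you handle adequately; your informal continuity sketch is fine, and your diagonalization cross-check settles it rigorously (even more directly: $d_i+\hat{\lambda}e_i>0$ for all $i$ together with $d_{i^*}+\bar{\lambda}e_{i^*}=0$, $e_{i^*}<0$ at a minimizing index gives $\hat{\lambda}<\bar{\lambda}$ immediately, and symmetrically for $\underline{\lambda}$).
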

Consider the following two cases.\\
{\bf Case 1:}$\bar{\lambda}\in I$ and $\lambda^{*}\in (\hat{\lambda},\bar{\lambda}]$.\\
First notice that since $\bar{\lambda}\in I$, then  $(a+\bar{\lambda}b)\in \text{Range}(A+\bar{\lambda}B)$. Hence, system
\begin{align}\label{sys111}
(A+\bar{\lambda}B)x=-(a+\bar{\lambda}b),
\end{align}
 is consistent. Assume that the set $\{v_1,v_2,...,v_r\}$ is an orthonormal basis of $\text{Null}(A+\bar{\lambda}B)$. Set $Z= [v_1, v_2,\cdots, v_r] $, then any solution of system (\ref{sys111}) has the form $x=x(\bar{\lambda})+Zy$ where $x(\bar{\lambda})=-(A+\bar{\lambda}B)^{\dagger}(a+\bar{\lambda}b)$ and $y\in \mathbb{R}^r$. Now consider the following maximization problem:
 \begin{align}\label{ma1}
 p^{*}:=\max_{y\in \mathbb{R}^r} \quad & g(x(\bar{\lambda})+Zy)= y^TZ^TBZy+2y^TZ^T(Bx(\bar{\lambda})+b)+g(x(\bar{\lambda})).
 \end{align}
 By Proposition 2, $Z^TBZ\prec 0$ and hence the optimal solution of (\ref{ma1}), $y^{*}$, is the unique solution of the positive definite system
 \begin{align}
 -(Z^TBZ)y^{*}=Z^T(Bx(\bar{\lambda})+b),
 \end{align}
and  $ p^{*}=g(x(\bar{\lambda})+Zy^{*})$.
In the sequel,  we show that $\lambda^{*}=\bar{\lambda}$ if and only if $p^{*}\geq 0$. To see this, suppose that $p^{*}\geq 0$. Set $x^{*}=x(\bar{\lambda})+Zy^{*}$. Next consider the following quadratic equation of variable $\alpha$:
\begin{align*}
 v^TBv \alpha^{2}+2\alpha v^T(Bx^{*}+b)+p^{*}=0,
\end{align*}
where $v\in \text{Null}(A+\bar{\lambda}B)$. Due to the facts that $v^TBv<0$ and $p^{*}\geq 0$, the above equation has a root $\alpha$. Now it is easy to see that $x^{*}:=x^{*}+\alpha v$ with $\lambda^{*}=\bar{\lambda}$ satisfies the optimality conditions of QCQP:
\begin{align*}
&(A+\bar{\lambda}B)x^{*}=-(a+\bar{\lambda}b),\\
&g(x^{*})=0.
\end{align*}
Next suppose that $\lambda^{*}=\bar{\lambda}$ and $x^{*}$ is a global optimal solution of QCQP. Since $\bar{\lambda}>0$, it follows from optimality conditions that $g(x^{*})=0$ and thus $p^{*}\geq 0$.\\
{\bf Case 2:} $\underline{\lambda}\in I$ and $\lambda^{*}\in [\underline{\lambda},\hat{\lambda})$.\\
This is similar to Case 1.
First notice that since $\underline{\lambda}\in I$, then $(a+\underline{\lambda}b)\in \text{Range}(A+\underline{\lambda}B)$. Hence, system
\begin{align}\label{sys1}
(A+\underline{\lambda}B)x=-(a+\underline{\lambda}b),
\end{align}
 is consistent. Assume that the set $\{v_1,v_2,...,v_r\}$ is an orthonormal basis of $\text{Null}(A+\underline{\lambda}B)$. Set $Z= [v_1, v_2,\cdots, v_r] $, then any solution of system (\ref{sys1}) has the form $x=x(\underline{\lambda})+Zy$ where $x(\underline{\lambda})=-(A+\underline{\lambda}B)^{\dagger}(a+\underline{\lambda}b)$ and $y\in \mathbb{R}^r$. Now consider the following minimization problem:
 \begin{align}\label{ma}
 p^{*}:=\min_{y\in \mathbb{R}^r} \quad & g(x(\underline{\lambda})+Zy)= y^TZ^TBZy+2y^TZ^T(Bx(\underline{\lambda})+b)+g(x(\underline{\lambda})).
 \end{align}
 By Proposition 2, $Z^TBZ\succ  0$ and hence the optimal solution of (\ref{ma}), $y^{*}$, is the unique solution of the positive definite system
 \begin{align}
 (Z^TBZ)y^{*}=-Z^T(Bx(\underline{\lambda})+b),
 \end{align}
and  $ p^{*}=g(x(\underline{\lambda})+Zy^{*})$.
In the sequel,  we show that $\lambda^{*}=\underline{\lambda}$ if and only if $p^{*}\leq 0$. To see this, suppose that $p^{*}\leq 0$. Set $x^{*}=x(\underline{\lambda})+Zy^{*}$. Next consider the following quadratic equation of variable $\alpha$:
\begin{align*}
 v^TBv \alpha^{2}+2\alpha v^T(Bx^{*}+b)+p^{*}=0,
\end{align*}
where $v\in \text{Null}(A+\underline{\lambda}B)$. Due to the facts that $v^TBv>0$ and $p^{*}\leq  0$, the above equation has a root $\alpha$. Now it is easy to see that $x^{*}:=x^{*}+\alpha v$ with $\lambda^{*}=\underline{\lambda}$ satisfies the optimality conditions of QCQP:
\begin{align*}
&(A+\underline{\lambda}B)x^{*}=-(a+\underline{\lambda}b),\\
&g(x^{*})=0.
\end{align*}
Next suppose that $\lambda^{*}=\underline{\lambda}$ and $x^{*}$ is a global optimal solution of QCQP. Since $\bar{\lambda}>0$, it follows from optimality conditions that $g(x^{*})=0$ and thus $p^{*}\leq  0$.\\
\begin{example}
Consider the
same problem in Example 1. We have $\underline{\lambda}\in I$ and $\lambda^{*}\in [\underline{\lambda},\hat{\lambda})$. Moreover,  $Z=[1,0]^T$  is an orthonormal basis of $\text{Null}(A+\underline{\lambda}B)$ and $x(\underline{\lambda})=[0,8]^T$.
It is easy to see that the optimal solution of problem (\ref{ma}), $y^{*}=-25$ and so  $p^{*}=-914<0$. Therefore, QCQP is a hard case 2 instance and $x^{*}=[-25+\sqrt{457}, 8]^T$ with $\lambda^{*}=\underline{\lambda}$ satisfy the  optimality conditions.
\end{example}
\subsection{Solving the nonlinear equation $\phi(\lambda)=0$ }
When  QCQP is not  hard case 2, $\lambda^{*}$ is the unique solution of the equation $\phi(\lambda)=0$ on the underlying interval containing the optimal Lagrange multiplier. If  $B$  is positive  semidefinite, it can be shown that the function $\phi(\lambda)$ is convex  on $I$ and thus a safeguarded version of Newton's method is a reasonable  choice for the solution of this equation. This is the approach used by Mor\'{e} and Sorenson \cite{moretrs}  for solving the TRS which at each iteration makes use of the Cholesky decomposition and hence is not proper for large-scale problems. If $B$ is indefinite, then $\phi(\lambda)$ may not be convex or concave and hence there is no guarantee that Newton's method will be convergent \cite{M16}. Here,
we propose an algorithm for the solution of  $\phi(\lambda)=0$ which is indeed the bisection method occupied with two techniques to accelerate its convergence \cite{wol,salahi}.
\\ \\
{\bf Algorithm for solving  equation $\phi(\lambda)=0$ }\\
 Iterate until a termination criterion is met:
\begin{enumerate}
\item Set $\lambda_{new}$ to the midpoint of the interval containing $\lambda^{*}$.
\item If the points at which $\phi(\lambda)$ has positive and negative values are available,
do inverse linear  interpolation, update $\lambda_{new}$ if inverse linear interpolation is successful.
\item At $\lambda_{new}$, take a step to the boundary if points at which $\phi(\lambda)$ has positive and negative values are known.
\end{enumerate}
{\bf End loop.}\\ \\
In  what follows, we explain two  techniques used in the algorithm.
\subsection*{\bf Inverse Interpolation}
  Let $[\lambda_b, \, \lambda_g]$ be the current interval containing $\lambda^{*}$. Moreover, suppose the relation  (\ref{12})  dose not hold. Then, by Lemma 2, $\phi(\lambda)$ is strictly decreasing  and hence, we can consider its inverse function. We  approximate the inverse  function $\lambda(\phi)$ by a linear function.  Then we set $\lambda_{new}=\lambda(0)$ if $\lambda(0)\in [\lambda_b, \, \lambda_g]$.
\subsection*{\bf Primal step to the boundary}
Let $x^{*}$ denote the optimal solution of QCQP. It can be shown that  as the algorithm proceeds,  $\lambda_{new}$ converges to the $\lambda^{*}$, the optimal Lagrange multiplier and hence, the sequence $x(\lambda)$ also will be  convergent to $x^{*}$. Now, suppose that there exist  values $\lambda_b$ and $\lambda_g$ with $\phi(\lambda_b)>0$ and $\phi(\lambda_g)<0$.  Then, we can take an inexpensive primal step to the point $\alpha x(\lambda_b)+(1-\alpha) x(\lambda_g)$ on the  boundary by choosing a  suitable step length $\alpha$. This  likely improves the objective value. We note that the resulting sequence is also convergent to $x^{*}$.
\subsection*{\bf Computing $\phi(\lambda)$}
At each iteration of the algorithm, $\phi(\lambda)$ is computed by applying  the conjugate gradient algorithm to  the positive definite system of linear equations
\begin{align}\label{ss}
(A+\lambda B)x(\lambda)=-(a+\lambda b).
\end{align}
The conjugate-gradient  method is one of the most widely used iterative
methods for solving symmetric positive-definite linear equations. In hard case 1 when $\lambda^{*}$ is near to $\bar{\lambda}$ or $\underline{\lambda}$,  system  (\ref{ss}) may become  ill-conditioned. In the following two theorems (Theorems \ref{TH2} and \ref{TH3}), we show that, to overcome near singularity, one can solve an alternative well-conditioned  positive definite system that has the same solution as (\ref{ss}). Before stating that,
we need the following proposition and lemma.
 \begin{proposition}\label{pppe1}
 Suppose that Assumption 2 holds. Then $(\lambda,v)$ is an eigenpair  of the pencil $(A,-B)$ if and only if $(-\frac{1}{\lambda-\hat{\lambda}},(A+\hat{\lambda}B)^{\frac{1}{2}}v)$ is an eigenpair of matrix
  $(A+\hat{\lambda}B)^{-\frac{1}{2}}B(A+\hat{\lambda}B)^{-\frac{1}{2}}$.
 \end{proposition}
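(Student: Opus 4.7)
The plan is a direct algebraic manipulation: convert the generalized eigenvalue equation for the pencil $(A,-B)$ into a standard eigenvalue equation by using $(A+\hat{\lambda}B)^{1/2}$ as a congruence, exploiting that Assumption~2 guarantees this symmetric positive definite square root exists and is invertible.

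First I would start from the definition of an eigenpair of $(A,-B)$, namely $Av=-\lambda Bv$ with $v\neq 0$. I would rewrite this by adding and subtracting $\hat{\lambda}Bv$ on the left-hand side, obtaining
\begin{equation*}
(A+\hat{\lambda}B)v=-(\lambda-\hat{\lambda})Bv.
\end{equation*}
At this step I would pause to argue that $\lambda\neq\hat{\lambda}$: if $\lambda=\hat{\lambda}$, the displayed equation forces $(A+\hat{\lambda}B)v=0$, but $A+\hat{\lambda}B\succ 0$ by Assumption~2, so $v=0$, contradicting that $v$ is an eigenvector.

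Next I would divide through by $-(\lambda-\hat{\lambda})$ and apply $(A+\hat{\lambda}B)^{-1/2}$ on the left of both sides. Setting $w:=(A+\hat{\lambda}B)^{1/2}v$ (so that $v=(A+\hat{\lambda}B)^{-1/2}w$), the left-hand side becomes $-\frac{1}{\lambda-\hat{\lambda}}w$ and the right-hand side becomes $(A+\hat{\lambda}B)^{-1/2}B(A+\hat{\lambda}B)^{-1/2}w$. This is exactly the asserted eigenpair relation. Since $v\neq 0$ and $(A+\hat{\lambda}B)^{1/2}$ is nonsingular, we have $w\neq 0$.

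For the converse, I would observe that every manipulation above is reversible: multiplying the standard eigenvalue equation by $(A+\hat{\lambda}B)^{1/2}$ on both sides recovers $(A+\hat{\lambda}B)v=-(\lambda-\hat{\lambda})Bv$, and rearrangement yields $Av=-\lambda Bv$. The implicit step that needs a brief check is that the standard eigenvalue $-1/(\lambda-\hat{\lambda})$ is well-defined, i.e., that the reconstructed $\lambda$ differs from $\hat{\lambda}$; this is automatic because any standard eigenvalue can be written in the form $-1/(\lambda-\hat{\lambda})$ for a unique finite $\lambda\neq\hat{\lambda}$. There is no real obstacle here—the only substantive point is invoking Assumption~2 to guarantee the positive definite square root and its inverse, which is what makes the congruence transformation legitimate.
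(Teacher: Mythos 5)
Your proposal is correct and follows essentially the same route as the paper: both rewrite the pencil equation $(A+\lambda B)v=0$ as a standard eigenvalue equation for $(A+\hat{\lambda}B)^{-\frac{1}{2}}B(A+\hat{\lambda}B)^{-\frac{1}{2}}$ via the positive definite square root guaranteed by Assumption~2, noting $\lambda\neq\hat{\lambda}$ from the nonsingularity of $A+\hat{\lambda}B$, with all steps reversible. Your explicit remarks on the nonvanishing of $w$ and the reversibility of the converse are just slightly more detailed versions of what the paper leaves implicit.
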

\begin{proof}
$(\lambda,v)$ is an eigenpair of pencil $(A,-B)$ if and only if $(A+\lambda B)v=0$,  implying that $(A+\hat{\lambda}B+(\lambda-\hat{\lambda})B)v=0$. Since $(A+\hat{\lambda}B)\succ 0$, it follows that $(I+(\lambda-\hat{\lambda})(A+\hat{\lambda}B)^{-\frac{1}{2}}B(A+\hat{\lambda}B)^{-\frac{1}{2}})(A+\hat{\lambda}B)^{\frac{1}{2}}v=0$. Note that $\lambda\not =\hat{\lambda}$ because $\det(A+\hat{\lambda}B)\not =0$. This implies that $((A+\hat{\lambda}B)^{-\frac{1}{2}}B(A+\hat{\lambda}B)^{-\frac{1}{2}}+\frac{1}{\lambda-\hat{\lambda}}I)(A+\hat{\lambda}B)^{\frac{1}{2}}v=0$ which completes the proof. \hspace{9.3 cm} $\Box$
\end{proof}
\begin{lemma}\label{lll}
Suppose that $\lambda\not =\hat{\lambda}$. Then  $(a+\lambda b)\in\text{Range}(A+\lambda B)$ if and only if $(-b+B(A+\hat{\lambda}B)^{-1}(a+\hat{\lambda}b))\in \text{Range}(A+\lambda B)$.
\end{lemma}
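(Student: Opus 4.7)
The plan is to reduce the claim to a single algebraic identity relating the two vectors modulo $\text{Range}(A+\lambda B)$. By Assumption 2, the matrix $A+\hat{\lambda}B$ is positive definite and hence invertible, so I may set $c := (A+\hat{\lambda}B)^{-1}(a+\hat{\lambda}b)$; note the vector in the second condition is exactly $-b + Bc$.

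Next I would rewrite $a+\lambda b$ in terms of $c$. Starting from $a + \hat{\lambda}b = (A+\hat{\lambda}B)c$, I get $a+\lambda b = (A+\hat{\lambda}B)c + (\lambda-\hat{\lambda})b$. Substituting $(A+\hat{\lambda}B) = (A+\lambda B) + (\hat{\lambda}-\lambda)B$ into the first term yields the key identity
\begin{equation*}
a+\lambda b \;=\; (A+\lambda B)c \;-\; (\lambda-\hat{\lambda})\bigl(-b + Bc\bigr).
\end{equation*}
Since $(A+\lambda B)c \in \text{Range}(A+\lambda B)$ automatically, this identity shows that $(a+\lambda b)$ and $(\lambda-\hat{\lambda})(-b+Bc)$ differ by an element of $\text{Range}(A+\lambda B)$.

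To finish, I would invoke the hypothesis $\lambda \neq \hat{\lambda}$: the nonzero scalar $(\lambda-\hat{\lambda})$ can be divided out, so membership of $(a+\lambda b)$ in $\text{Range}(A+\lambda B)$ is equivalent to membership of $(-b+Bc)$ in $\text{Range}(A+\lambda B)$, which is exactly the desired equivalence.

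There is essentially no obstacle here beyond locating the right rearrangement; the main point to be careful about is that the step dividing by $(\lambda-\hat{\lambda})$ genuinely requires the stated hypothesis $\lambda \neq \hat{\lambda}$, and that the invertibility of $A+\hat{\lambda}B$ (needed to define $c$ unambiguously) comes from Assumption 2 rather than any property of $\lambda$.
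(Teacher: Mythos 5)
Your proof is correct and rests on the same algebraic core as the paper's: the decomposition $A+\lambda B=(A+\hat{\lambda}B)+(\lambda-\hat{\lambda})B$ together with dividing out the nonzero scalar $\lambda-\hat{\lambda}$; the paper phrases it as a change of variables $\bar{x}=\bar{y}-(A+\hat{\lambda}B)^{-1}(a+\hat{\lambda}b)$ in the linear system, which produces exactly your identity. If anything, your formulation as a single identity modulo $\text{Range}(A+\lambda B)$ makes the two-way equivalence slightly more explicit than the paper's one-directional wording.
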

\begin{proof}
$(a+\lambda b)\in \text{Range}(A+\lambda B)$ if and only if  there exists $\bar{x}$ such that $(A+\lambda B)\bar{x}=-(a+\lambda b)$. This implies that
$$[(A+\hat{\lambda}B)+(\lambda-\hat{\lambda})B]\bar{x}=-[(a+\hat{\lambda}b)+(\lambda-\hat{\lambda})b].$$
Next,  the change of variable $\bar{x}=\bar{y}-(A+\hat{\lambda}B)^{-1}(a+\hat{\lambda}b)$ gives
\begin{align}
(A+\lambda B)\bar{y}=(\lambda-\hat{\lambda})(-b+B(A+\hat{\lambda}B)^{-1}(a+\hat{\lambda}b)),
\end{align}
implying that $(-b+B(A+\hat{\lambda}B)^{-1}(a+\hat{\lambda}b))\in \text{Range}(A+\lambda B)$.\hspace{3 cm} $\Box$
\end{proof}
\begin{theorem}\label{TH2}
Suppose that $(a+\underline{\lambda}b)\in \text{Range}(A+\underline{\lambda}B)$. Moreover, assume that the set $\{v_1,\cdots,v_r\}$ is a basis of $\text{Null}(A+\underline{\lambda}B)$ that is $(A+\hat{\lambda} B)$-orthogonal, i.e.,  $v_i^T(A+\hat{\lambda}B)v_i=1$ and $v_i^T(A+\hat{\lambda}B)v_j=0$ for all $i\not = j$. Let  $\hat{\lambda}\not =\lambda \in (\underline{\lambda},\bar{\lambda})$,  then $x^{*}$ is the solution of system
\begin{align}\label{s1}
(A+\lambda B)x=-(a+\lambda b),
\end{align}
if and only if $y^{*}=x^{*}+z $ is the  solution of
\begin{align}\label{s2}
[A+\lambda B +\alpha \sum_{i=1}^r(A+\hat{\lambda}B)v_iv_i^T(A+\hat{\lambda}B)]y=(\lambda -\hat{\lambda})(Bz-b)
\end{align}
where $z=(A+\hat{\lambda}B)^{-1}(a+\hat{\lambda}b)$ and $\alpha $ is an arbitrary  positive constant. The same assertion holds when $\underline{\lambda}$ is replaced by $\bar{\lambda}$.
\end{theorem}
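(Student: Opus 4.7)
The plan is to establish the equivalence by a direct change of variable $y = x + z$ and then show that the rank-one correction term $\alpha\sum_i (A+\hat{\lambda}B)v_iv_i^T(A+\hat{\lambda}B)$ acts trivially on the candidate solution $y^* = x^* + z$. Since $\lambda \in (\underline{\lambda},\bar{\lambda})$ lies in the open interval, $A+\lambda B \succ 0$ and so (\ref{s1}) has a unique solution; likewise the coefficient matrix of (\ref{s2}) is the sum of a positive definite matrix and a positive semidefinite correction, so (\ref{s2}) also has a unique solution. Therefore it suffices to prove one direction of the equivalence, and uniqueness yields the other.

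First I would perform the substitution. Starting from $(A+\lambda B)x^* = -(a+\lambda b)$, I set $y^* = x^*+z$ with $z = (A+\hat{\lambda}B)^{-1}(a+\hat{\lambda}b)$. Writing $A+\lambda B = (A+\hat{\lambda}B) + (\lambda-\hat{\lambda})B$ and using the definition of $z$ gives the identity $(A+\lambda B)z = (a+\hat{\lambda}b) + (\lambda-\hat{\lambda})Bz$, so a short computation yields
\begin{align*}
(A+\lambda B)y^* = (A+\lambda B)x^* + (A+\lambda B)z = (\lambda-\hat{\lambda})(Bz - b),
\end{align*}
which matches the right-hand side of (\ref{s2}). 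It remains to show that $(A+\hat{\lambda}B)v_i v_i^T(A+\hat{\lambda}B)y^* = 0$ for each $i$, i.e., $v_i^T(A+\hat{\lambda}B)y^* = 0$.

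The main technical step is verifying this orthogonality. I would exploit that $v_i \in \text{Null}(A+\underline{\lambda}B)$, so $v_i^T A = -\underline{\lambda} v_i^T B$, which lets me rewrite $v_i^T(A+\hat{\lambda}B) = (\hat{\lambda}-\underline{\lambda})v_i^T B$ and $v_i^T(A+\lambda B) = (\lambda-\underline{\lambda})v_i^T B$. Left-multiplying (\ref{s1}) by $v_i^T$ solves for $v_i^T B x^* = -v_i^T(a+\lambda b)/(\lambda-\underline{\lambda})$, while $v_i^T(A+\hat{\lambda}B)z = v_i^T(a+\hat{\lambda}b)$. Adding these contributions and simplifying reduces $v_i^T(A+\hat{\lambda}B)y^*$ to a multiple of $v_i^T(a+\underline{\lambda}b)$. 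Here I invoke the standing hypothesis $(a+\underline{\lambda}b)\in \text{Range}(A+\underline{\lambda}B)$: since $A+\underline{\lambda}B$ is symmetric, its range is the orthogonal complement of its null space, so $v_i^T(a+\underline{\lambda}b)=0$, which closes the argument.

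Finally, the converse follows from uniqueness. Any solution $y^*$ of (\ref{s2}) defines $x^* := y^* - z$, and reversing the computation (together with the correction term vanishing on $y^*$) produces $(A+\lambda B)x^* = -(a+\lambda b)$; by the uniqueness of solutions of both positive definite systems, the correspondence $x^*\leftrightarrow y^*$ is a bijection. The case in which $\underline{\lambda}$ is replaced by $\bar{\lambda}$ is identical verbatim, since the only property of $\underline{\lambda}$ used is that $v_i \in \text{Null}(A+\underline{\lambda}B)$ together with the range condition; the sign of $v_i^T B v_i$ from Proposition \ref{pcp} is not needed for this equivalence (it only guarantees that $v_i^T(A+\hat{\lambda}B)v_i \neq 0$, which is already ensured by $A+\hat{\lambda}B\succ 0$). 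The main obstacle is the bookkeeping in the orthogonality computation; everything else is routine.
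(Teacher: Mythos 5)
Your proof is correct, but it takes a genuinely different route from the paper. The paper proves that the two linear systems are equivalent as systems: it passes to the congruence-transformed matrix $M=(A+\hat{\lambda}B)^{-\frac{1}{2}}B(A+\hat{\lambda}B)^{-\frac{1}{2}}$, uses Proposition \ref{pppe1} to identify $(A+\hat{\lambda}B)^{\frac{1}{2}}v_i$ as eigenvectors of $M$ with eigenvalue $-\frac{1}{\underline{\lambda}-\hat{\lambda}}$, diagonalizes, and then uses Lemma \ref{lll} to show the first $r$ components of the transformed right-hand side vanish, so that adding $\alpha$ to the (already positive) first $r$ diagonal entries does not change the solution. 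You instead argue pointwise: after the same change of variable $y^{*}=x^{*}+z$, you verify directly that the rank-$r$ correction annihilates the candidate $y^{*}$, by computing $v_i^T(A+\hat{\lambda}B)y^{*}=\frac{\lambda-\hat{\lambda}}{\lambda-\underline{\lambda}}\,v_i^T(a+\underline{\lambda}b)$ from $v_i^T(A+\underline{\lambda}B)=0$, and then killing this with the range hypothesis (range $\perp$ null space by symmetry); the converse you get from positive definiteness and uniqueness of both systems, since $A+\lambda B\succ 0$ on the open interval $(\underline{\lambda},\bar{\lambda})$ and the correction is positive semidefinite. Your computation is valid (note it needs $\lambda\neq\underline{\lambda}$, which the open-interval hypothesis supplies), and your closing uniqueness argument is the clean way to get the reverse implication — the parenthetical "reversing the computation" is unnecessary once uniqueness is invoked. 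What your approach buys is economy: no matrix square roots, no eigendecomposition, and no need for Proposition \ref{pppe1} or Lemma \ref{lll} (your orthogonality computation replaces the latter); you also correctly observe that Proposition \ref{pcp} plays no role here, and that the $\bar{\lambda}$ case is verbatim. What the paper's spectral argument buys is an explicit picture of how the shift $\alpha$ acts coordinatewise, which also makes Theorem \ref{TH3} (positive definiteness of the shifted matrix at $\lambda=\underline{\lambda}$ or $\bar{\lambda}$, where your "PD plus PSD" shortcut is unavailable) transparent in the same coordinates; but for Theorem \ref{TH2} itself your more elementary argument is complete.
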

\begin{proof}
It is easy to see that $x^{*}$ is the unique  solution of system (\ref{s1}) if and only if $y^{*}=x^{*}+z$ is the solution of
\begin{align}\label{s3}
(A+\lambda B)y=(\lambda-\hat{\lambda})(Bz-b).
\end{align}
To prove the theorem, it is sufficient to show that systems (\ref{s3}) and (\ref{s2}) are equivalent. System (\ref{s2}) can be rewritten as follows:
\begin{align*}
[(A+\hat{\lambda}B)+(\lambda-\hat{\lambda}) B +\alpha \sum_{i=1}^r(A+\hat{\lambda}B)v_iv_i^T(A+\hat{\lambda}B)]y=(\lambda-\hat{\lambda})(Bz-b).
\end{align*}
Since $(A+\hat{\lambda}B)\succ 0$, we have
\begin{multline}\label{s4}
[I+(\lambda-\hat{\lambda})(A+\hat{\lambda}B)^{-\frac{1}{2}}B(A+\hat{\lambda}B)^{-\frac{1}{2}}+\alpha \sum_{i=1}^r(A+\hat{\lambda}B)^{\frac{1}{2}} v_iv_i^T(A+\hat{\lambda}B)^{\frac{1}{2}}](A+\hat{\lambda}B)^{\frac{1}{2}}y\\=(\lambda-\hat{\lambda})(A+\hat{\lambda}B)^{-\frac{1}{2}}(Bz-b).
\end{multline}
Set $M=(A+\hat{\lambda}B)^{-\frac{1}{2}}B(A+\hat{\lambda}B)^{-\frac{1}{2}}$, then by Proposition \ref{pppe1},  $q_i=(A+\hat{\lambda}B)^{\frac{1}{2}}v_i, i=1,...,r$,  are the eigenvectors of $M$ corresponding to the eigenvalue $-\frac{1}{\underline{\lambda}-\hat{\lambda}}$. Note that $q_i^Tq_i=1$ for $i=1,...,r$ and $q_i^Tq_j=0$ for all $i\not =j\in\{1,...,r\}$. Without loss of generality, let  $M=QDQ^T$ be the eigenvalue decomposition of $M$ in which $Q$ contains $q_i=1,...,r$ as its $r$ first columns.
   From (\ref{s4}) we get
\begin{multline}\label{s6}
[I+(\lambda-\hat{\lambda})D+\alpha \sum_{i=1}^rQ^T(A+\hat{\lambda}B)^{\frac{1}{2}}v_iv_i^T(A+\hat{\lambda}B)^{\frac{1}{2}}Q]Q^T(A+\hat{\lambda}B)^{\frac{1}{2}}y=\\(\lambda-\hat{\lambda})Q^T(A+\hat{\lambda}B)^{-\frac{1}{2}}(Bz-b).
\end{multline}
Furthermore,  we have
\begin{align}\label{s7}
Q^T(A+\hat{\lambda}B)^{\frac{1}{2}}v_iv_i^T(A+\hat{\lambda}B)^{\frac{1}{2}}Q=e_ie_i^T,\quad i=1,...,r,
\end{align}
due to the fact that
\begin{align*}
&q_k^T(A+\hat{\lambda}B)^{\frac{1}{2}}v_iv_i^T(A+\hat{\lambda}B)^{\frac{1}{2}}q_j=
\begin{cases}
1&if \,\, k=j=i\in\{1,...,r\},\\
0& otherwise,
\end{cases}
\end{align*}
where $e_i$ is the unit vector  and $q_i,i=1,...,n$, denote $i$'th column of $Q$.
 Therefore, it follows from (\ref{s6}) and (\ref{s7}) that
\begin{align}\label{s8}
(I+(\lambda-\hat{\lambda})D+\alpha \sum_{i=1}^re_ie_i^T)Q^T(A+\hat{\lambda}B)^{\frac{1}{2}}y=(\lambda-\hat{\lambda})Q^T(A+\hat{\lambda}B)^{-\frac{1}{2}}(Bz-b).
\end{align}
Now since $(a+\underline{\lambda}b)\in \text{Range}(A+\underline{\lambda}B)$, by Lemma \ref{lll}, $(Bz-b)\in \text{Range}(A+\underline{\lambda} B)$ and consequently  $v_i^T(Bz-b)=0$ for $ i=1,...,r$. This implies that
$$q_i^T(A+\hat{\lambda}B)^{-\frac{1}{2}}(Bz-b)=0, \quad i=1,...,r.$$
Therefore, the $r$ first components of the right hand side vector in system  (\ref{s8}) are zero. Further let $c_i, i=1,...,r$, denote the $r$ first diagonal elements of matrix $(I+(\lambda-\hat{\lambda})D) $. By Proposition \ref{pppe1}, it is easy to see that $ c_i=\frac{\underline{\lambda}-\lambda}{\underline{\lambda}-\hat{\lambda}}>0,  i=1,...,r$. Since $\alpha>0$, the above discussion proves that  system (\ref{s8})
 is equivalent to the following system:
 \begin{align*}
(I+(\lambda-\hat{\lambda})D)Q^T(A+\hat{\lambda}B)^{\frac{1}{2}}y=(\lambda-\hat{\lambda})Q^T(A+\hat{\lambda}B)^{-\frac{1}{2}}(Bz-b).
\end{align*}
This is also equivalent to
$$(A+\lambda B)y=(\lambda-\hat{\lambda})(Bz-b),$$
which completes the proof. When $\underline{\lambda}$ is replaced by $\bar{\lambda}$, the assertion can be proved in a similar manner.
\hspace{9 cm} $\Box$
\end{proof}
\begin{theorem}\label{TH3}
Suppose that $(a+\underline{\lambda}b)\in \text{Range}(A+\underline{\lambda}B)$. Moreover, assume that the set $\{v_1,\cdots,v_r\}$ is a basis of $\text{Null}(A+\underline{\lambda}B)$ that is $(A+\hat{\lambda} B)$-orthogonal, i.e.,  $v_i^T(A+\hat{\lambda}B)v_i=1$ and $v_i^T(A+\hat{\lambda}B)v_j=0$ for all $i\not = j$.  Then matrix $\tilde{A}=
A+\underline{\lambda }B +\alpha \sum_{i=1}^r(A+\hat{\lambda}B)v_iv_i^T(A+\hat{\lambda}B)
$
is positive definite. The same assertion holds when $\underline{\lambda}$ is replaced by $\bar{\lambda}$.
\end{theorem}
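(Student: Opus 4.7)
The plan is to give a direct decomposition argument rather than to diagonalize through the transformation used in Theorem~\ref{TH2}. The key observation is that $\tilde{A}$ is a sum of two matrices that are each positive semidefinite, and I will show that on any nonzero vector at least one of the two contributions is strictly positive.

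First I would note that $A+\underline{\lambda}B\succeq 0$. This is immediate from the definition of $\underline{\lambda}$ as the left endpoint of the interval on which $A+\lambda B\succeq 0$, which was established just after Lemma~\ref{ll1}. Second, the rank-$r$ correction
\[
P:=\alpha\sum_{i=1}^{r}(A+\hat{\lambda}B)v_{i}v_{i}^{T}(A+\hat{\lambda}B)
\]
is manifestly positive semidefinite, since $\alpha>0$ and it is a nonnegative combination of outer products. Hence $\tilde{A}$ is positive semidefinite, and the only task is to rule out nontrivial null vectors.

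Pick any $x\neq 0$ with $x^{T}\tilde{A}x=0$. Since both $x^{T}(A+\underline{\lambda}B)x$ and $x^{T}Px$ are nonnegative, both must vanish. From $x^{T}(A+\underline{\lambda}B)x=0$ and $A+\underline{\lambda}B\succeq 0$, I get $x\in\mathrm{Null}(A+\underline{\lambda}B)$, so I can expand $x=\sum_{j=1}^{r}c_{j}v_{j}$ with the $c_{j}$ not all zero. The crucial calculation is then
\[
v_{i}^{T}(A+\hat{\lambda}B)x=\sum_{j=1}^{r}c_{j}\,v_{i}^{T}(A+\hat{\lambda}B)v_{j}=c_{i},
\]
by the $(A+\hat{\lambda}B)$-orthonormality of the basis $\{v_{1},\dots,v_{r}\}$. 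Substituting into the definition of $P$ yields $x^{T}Px=\alpha\sum_{i=1}^{r}c_{i}^{2}>0$, a contradiction. Therefore $\tilde{A}\succ 0$.

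The case where $\underline{\lambda}$ is replaced by $\bar{\lambda}$ is strictly analogous: the definition of $\bar{\lambda}$ gives $A+\bar{\lambda}B\succeq 0$, and the null-vector identification combined with the $(A+\hat{\lambda}B)$-orthonormality of the new basis of $\mathrm{Null}(A+\bar{\lambda}B)$ runs word-for-word as above. There is no real obstacle here; the only place to be careful is to use $(A+\hat{\lambda}B)$-orthonormality (not plain orthonormality) to extract the coefficient $c_{i}$, and to remember that $A+\underline{\lambda}B$ is PSD rather than merely symmetric, so that $x^{T}(A+\underline{\lambda}B)x=0$ actually forces $x$ into the null space.
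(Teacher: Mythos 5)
Your proof is correct and follows essentially the same approach as the paper: both show $\tilde{A}\succeq 0$ as a sum of PSD terms and then kill any null vector by using $x^{T}(A+\underline{\lambda}B)x=0\Rightarrow x\in\mathrm{Null}(A+\underline{\lambda}B)$ together with the $(A+\hat{\lambda}B)$-orthonormality of $\{v_1,\dots,v_r\}$ to force all coefficients $c_i$ to vanish. The only (harmless) difference is that you apply the PSD null-space fact directly to $x$, whereas the paper first splits $x=x_1+x_2$ into null-space and range components and shows $x_2=0$ — your version is a slight streamlining of the same argument.
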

\begin{proof}
Since $A+\underline{\lambda}B\succeq 0$ and $\sum_{i=1}^r(A+\hat{\lambda}B)v_iv_i^T(A+\hat{\lambda}B)\succeq 0$, then $\tilde{A}\succeq 0$. To show that $\tilde{A}\succ 0$ it is sufficient to prove that if $x^T\tilde{A}x=0$ for $x\in \mathbb{R}^n$, then $x=0$. For any $x\in \mathbb{R}^n$, there exist unique $x_1\in \text{Null}(A+\underline{\lambda}B)$ and $x_2\in \text{Range}(A+\underline{\lambda}B)$ such that $x=x_1+x_2$.
We have $$x^T(\tilde{A})x=x_2^T(A+\underline{\lambda}B)x_2+\alpha \sum_{i=1}^r(v_i^T(A+\hat{\lambda}B)x)^2=0.$$
This implies that
\begin{align}\label{en}
 x_2^T(A+\underline{\lambda}B)x_2=0, \quad v_i^T(A+\hat{\lambda}B)x=0, \quad i=1,...,r.
 \end{align}
 Since $(A+\underline{\lambda}B) \succeq 0$, we obtain $ (A+\underline{\lambda}B)x_2=0$, and  hence  $x_2\in \text{Null}(A+\underline{\lambda}B)$. Together with the assumption that $x_2 \in \text{Range}(A+\underline{\lambda} B)$ we obtain $x_2=0$. Thus  $x=x_1$. On the other hand $x_1$ can be written as $x_1=\sum_{j=1}^rc_jv_j$ where $c_j\in \mathbb{R} $ for $j=1,...r$. So it follows from (\ref{en}) that
 $\sum_{j=1}^rc_jv_i^T(A+\hat{\lambda}B)v_j=0$ for $i=1,...,r$, resulting in
  $c_j=0$ for $j=1,...,r$. Therefore $x_1=0$ and consequently $x=0$, which completes the proof.
  When $\underline{\lambda}$ is replaced by $\bar{\lambda}$, the assertion can be proved in a similar manner.
   \hspace{9.5 cm}$\Box$
\end{proof}
\section{Numerical  Experiments}
In this section, on several randomly generated problems of various dimensions, we compare the proposed conjugate gradient-based algorithm (CGB) with Algorithms 1 and 2 from  \cite{novel} and Algorithm 3.2 from \cite{qeig}.
The comparison with Algorithm 3.2 of \cite{qeig} is done for dimension up to 5000 as it needs longer time to solve larger dimensions.
All computations are  performed in MATLAB 8.5.0.197613 on a 1.70 GHz laptop with 4 GB of RAM.
Throughout the paper, we have assumed that there exists $\hat{\lambda}$ such that $A+\hat{\lambda} B \succ 0$. In practice $\hat{\lambda}$ is usually unknown in advance, but since all four  algorithms require it for initialization, we assume that $\hat{\lambda}$ is known and skip its computation. However, the algorithms in \cite{hat2,hat1,M16} can be used to find $\hat{\lambda}$. We use the stopping criterion
\begin{align*}
\max\quad  \{ |\phi(\lambda)|, ||(A+\lambda B)x+(a+\lambda b)|| \} <10^{-8},
\end{align*}
or
\begin{align*}
\frac{|\text{high}-\text{low}|}{|\text{high}| + |\text{low}|} <10^{-11}
\end{align*}
for solving the equation $\phi(\lambda)=0$ where $x$ is the best feasible solution of QCQP obtained up to the current iteration and $[\text{low},\text{high}]$ is the  interval containing $\lambda^{*}$ at the current iteration.  We set $\epsilon_1=10^{-10}$, $\epsilon_2=10^{-13}$ and $\epsilon_3=10^{-10}$  in Algorithms 1 and 2 from \cite{novel}.  As in \cite{novel}, to compute $\underline{\lambda}$ and $\bar{\lambda}$, the generalized eigenvalue problem is
solved by \verb|eigifp|\footnote{eigifp is a MATLAB program for computing a few algebraically smallest or largest eigenvalues and their
corresponding eigenvectors of the generalized eigenvalue problem, available from http://www.ms.uky.edu/
~qye/software.html.}. To verify hard case 2, as described in Section 3.1,
 we need to compute an  orthonormal basis of $\text{Null}(A+\underline{\lambda} B)$ or $\text{Null}(A+\bar{\lambda} B)$. Recall that $A+\underline{\lambda}B$ and $A+\bar{\lambda}B$ are singular and positive semidefinite. Thus, a nullspace vector of $A+\underline{\lambda}B$ and $A+\bar{\lambda}B$ can be found
by finding an eigenvector corresponding to the smallest eigenvalue, i.e., 0, which  in our numerical tests, is done by  \verb|eigifp|. We apply the Newton  refinement process in Section
4.1.2 of \cite{qeig} to all four algorithms to improve the accuracy of the solution.
For simplicity, we consider QCQP with nonsingular $B$ including
 positive definite and indefinite $B$. Hence, we can assume without loss of generality that
$b = 0$. Our test problems include both easy and hard case (case 1 and case 2) instances.
\subsection{First class of test problems}
In this class of test problems, we consider  QCQP with positive definite $A$ and indefinite $B$. Thus, we set $\hat{\lambda}=0$.
 We follow Procedure 1, Procedure 2 and Procedure 3 to generate an easy case, hard case 1 and hard case  2 instance, respectively\\
{\bf Procedure 1. (Generating an easy case instance)  }\\
We first generate randomly a   sparse positive definite matrix $A$ and a sparse indefinite matrix $B $ via \verb|A=sprandsym(n,density,1/cond,2)| and
\verb| B = sprandsym(n,density)| where
\verb|cond| refers to condition number. After computing $\bar{\lambda}$,  we set $a = -(A +\lambda B)x_0$, where  $x_0$ is computed via \verb|x0=randn(n,1)/10| and $\lambda$ is chosen uniformly from $(0,\bar{\lambda})$. Next, to avoid generating a QCQP   having an interior solution, we chose $\beta$ randomly  from $(-s ,-\ell)$ where $s=x_c^TBx_c$,  $\ell=x_0^TBx_0$ and  $x_c$ is the solution of linear system $Ax=-a$. From the optimality
conditions  we see that the above construction likely gives an easy case instance.\\
{\bf Procedure 2. (Generating a hard case 1 instance)  }\\
We generate   $A$, $B $, $x_0$ and $\beta$ as the above procedure  but  we set $a = -(A +\bar{\lambda} B)x_0$. From the optimality
conditions  we see that this construction gives a hard  case  1 instance.\\
{\bf Procedure 3. (Generating a hard case 2 instance)  }\\
We  generate   $A$, $B $ and $a$ as Procedure 2 but  we set $\beta=-\ell$.\\

We set $cond=10,100,1000$ and for each dimension and each condition number, we generated 10 instances, and the corresponding numerical results are adjusted in Tables 1 and 2, where we
report   the average
runtime in second (Time)  and accuracy (Accuracy). To measure the accuracy, we have computed the relative
objective function difference as follows:
$$ \frac{q(x^{*})-q(x_{best})}{|q(x_{best})|},$$
where $x^{*}$ is the computed solution by each method and $x_{best }$ is the solution with the
smallest objective value  among the four  algorithms.  We use "Alg1" and  "Alg2" to denote  Algorithms 1 and 2 in \cite{novel}, respectively,  and "Alg3" to denote Algorithm 3.2 in \cite{qeig}.

\begin{table}
\caption{Computational results of the first class of test problems with  density=1e-2}
\noindent

\begin{tabular}{@{}c|c|c|c|c|c|c|c|c|c @{}}
\hline
  & &\multicolumn{4}{|c|}{Time(s)}& \multicolumn{4}{|c}{Accuracy} \\
  \hline
 &n & CGB & Alg1  & Alg2 & Alg3 & CGB & Alg1 & Alg2 & Alg3 \\
 \hline
 cond=10&\multicolumn{4}{|c}{}\\
 \hline
 &1000 &  \bf{0.24} &  0.32&    0.27  &     0.60   &  0.0000e+00 & 1.3534e-14& 2.9908e-14   & 2.703592e-16 \\
&2000 &  0.36 &  0.43&    0.36 &     3.85   &  9.5123e-17  & 2.1722e-15& 2.7597e-15   & 4.640596e-17 \\
Easy Case&3000 &   \bf{0.50} &  0.79&    0.52  &    11.56   &  4.4606e-17   & 5.4207e-16& 3.9725e-16   & 7.106022e-17 \\
&4000 &  \bf{0.92} &  1.40&    1.24  &    30.17   &  2.4456e-16  & 1.7972e-15& 2.2517e-15   & 1.711915e-16 \\
&5000 &  \bf{1.14 }&  1.66&    1.25   &   417.53   &  4.0827e-16   & 4.2164e-16& 4.5748e-16   & 2.455677e-16 \\
\hline
cond=100&\multicolumn{4}{|c}{}\\
\hline
&1000 &  0.56&  0.74&    0.58 &     \bf{0.54}  & 2.2119e-16   & 7.7736e-13& 1.3902e-11   &2.089449e-16  \\
&2000 &  \bf{0.87} &  1.16&    0.92  &     3.61   &6.9482e-17   & 1.2703e-13& 5.0994e-13   & 1.123734e-16 \\
Easy Case &3000& \bf{1.98} &  2.84&    2.32  &   12.70& 1.2295e-16   &  6.3770e-14   & 2.6147e-12& 4.2882e-17  \\
&4000 & \bf{1.98}&  2.84&    2.32 &   50.70& 1.6666e-16   &  4.2965e-14   & 4.6357e-13& 5.4903e-17  \\
&5000 &   \bf{4.18}&  6.97&    5.16    &   673.36   &  2.5057e-16   & 1.8041e-14& 5.3498e-14   &1.4510e-17 \\
\hline
cond=1000&\multicolumn{4}{|c}{}\\
\hline
&1000 &   3.75 &  5.33&    4.13  &     \bf{0.56}    &  1.3319e-16   & 1.2508e-11& 1.5758e-09   & 3.415593e-16 \\
&2000 &   4.29 &  6.47&    4.45  &     \bf{3.84}   &  1.5526e-16   & 3.3995e-12& 2.8861e-10   & 1.421543e-16 \\
Easy Case&3000 &   \bf{8.59} & 17.89&    9.56  &    12.17   & 4.3130e-15  & 1.2760e-12& 8.4678e-11   & 2.068219e-16 \\
&4000 &  \bf{10.56} & 18.10&   13.71  &    41.88   &  1.2220e-15   & 5.5054e-13& 1.1153e-11   & 6.020699e-17 \\
&5000 &  \bf{16.36} & 30.95&   21.69  &   371.55   &  1.7654e-16   & 4.5521e-13& 3.9881e-11   & 1.804393e-16 \\
\hline
cond=10&\multicolumn{4}{|c}{}\\
\hline
&1000 &   0.29 &  0.30&    \bf{0.23}  &     0.68   &  2.5820e-16   & 4.6885e-14& 4.7315e-13   & 2.141403e-16 \\
&2000 &   \bf{0.49} &  1.53&    0.72  &     5.33   &  8.9252e-17   & 2.9432e-13& 3.1029e-11   & 9.468005e-17 \\
Hard Case 1&3000 &   1.60 &  1.87&    \bf{1.56}  &    18.63   &  8.8924e-17   & 1.0251e-14& 8.0005e-14   & 8.529213e-17 \\
&4000 &   \bf{1.34} &  1.63&    1.50  &    48.54   &  7.6730e-17   & 4.5916e-15& 5.3948e-15   & 1.863880e-16 \\
&5000 &   \bf{6.09} &  9.09&    6.59  &   307.54   &  1.7215e-16   & 4.9540e-15& 2.8191e-15   & 4.814561e-16 \\
\hline
cond=100&\multicolumn{4}{|c}{}\\
\hline
&1000 &   0.37 &  0.72&    \bf{0.33}  &     0.61   &  3.4527e-16   & 2.1326e-12& 1.1098e-10   & 1.517419e-16 \\
&2000 &   \bf{0.70} &  2.26&    1.07  &     4.40   &  1.4639e-16   & 1.8446e-12& 2.0338e-11   & 1.263138e-16 \\
Hard Case 1&3000 &   3.46 &  4.41&    \bf{3.29}  &    12.75   &  4.1817e-16  & 1.5004e-13& 1.8512e-12   &5.416258e-17 \\
&4000 &   \bf{5.36} &  8.24&    6.14  &    43.24   &  2.8051e-16   & 2.1693e-13& 6.9215e-12   & 1.061026e-16 \\
&5000 &   \bf{9.04} & 13.48&   10.05  &   310.26   &  8.2448e-17   & 6.1508e-14& 1.5047e-13   & 4.103097e-17 \\
\hline
cond=1000&\multicolumn{4}{|c}{}\\
\hline
&1000 &   1.41 &  6.77&    1.38  &     \bf{0.70}   &  1.7472e-16   & 1.0359e-10& 6.6120e-09   & 1.024406e-16 \\
&2000 &   4.52 &  7.23&    4.77  &     \bf{4.04}   &  3.6154e-16   & 3.4160e-12& 2.2689e-10   & 1.746964e-16 \\
Hard Case 1&3000 &  \bf{11.11} & 29.03&   13.83 &  20.93   &  1.5364e-16   & 9.3206e-11& 2.6199e-09   & 1.716415e-16 \\
&4000 &  \bf{23.37} & 47.93&   27.75  &    52.19   &  1.3664e-16   & 1.7287e-12& 1.0004e-10   & 1.511822e-16 \\
&5000 &  \bf{21.09} & 46.43&   28.84  &   402.58   &  6.0663e-16   & 1.5179e-12& 2.0300e-10   & 2.686279e-16 \\
\hline
cond=10&\multicolumn{4}{|c}{}\\
\hline
&1000 &   \bf{0.26} &  0.88&    0.36  &     3.00   &  7.4723e-17   & 4.9359e-13& 4.8652e-11   & 5.471231e-16 \\
&2000 &   \bf{0.36} &  1.58&    0.73  &    16.70   &  0.0000e+00  & 1.6001e-13& 3.8674e-13   & 3.896115e-16 \\
Hard Case 2&3000 &   \bf{0.77} &  7.40&    2.45  &   173.19   &  3.4482e-16   & 2.4531e-13& 7.8146e-11   & 3.030414e-16 \\
&4000 &   \bf{1.44} & 12.18&    3.98  &   852.45   &  4.5639e-17   & 8.9613e-14& 1.3460e-12   & 2.422850e-16 \\

&5000 &   \bf{2.03} & 30.51&    5.31  &   1463.27   &  3.0985e-16   & 2.0535e-13& 3.5345e-12   & 1.831296e-16 \\
\hline
cond=100&\multicolumn{4}{|c}{}\\
\hline
&1000 &   \bf{0.62} &  7.21&    2.12  &     4.23   &  2.1081e-16   & 2.7349e-11& 4.0860e-09   & 6.574916e-16 \\
&2000 &   \bf{1.01} &  7.94&    3.01  &    15.10   &  1.3175e-16   & 3.5787e-12& 5.1980e-10   & 8.660221e-17 \\
Hard Case 2&3000 &   \bf{2.71} & 46.79&    9.41  &   236.70   &  1.0358e-16   & 1.3325e-11& 2.5842e-09   & 9.794199e-17 \\
&4000 &   \bf{4.29} &118.69&   20.47  &   495.63   &  6.4502e-17   & 7.7936e-12& 3.0469e-09   & 6.557389e-17 \\
& 5000 &   \bf{8.22} &209.70&   52.54  &   697.56   &  1.2621e-16   & 6.3835e-12& 7.1765e-10   & 6.557389e-17 \\

\hline
cond=1000&\multicolumn{4}{|c}{}\\
\hline
&1000 &   \bf{2.16} & 14.14&    4.75  &     3.24   &  3.6635e-16   & 3.9804e-09& 5.2611e-08   & 1.920469e-16 \\
&2000 &   \bf{3.49} & 31.50&    7.13  &    29.45   &  4.3374e-17  & 6.5600e-09& 1.9302e-08   & 1.173405e-16 \\
Hard Case 2&3000 &   \bf{9.67} &122.63&   27.69  &    79.25   &  4.2925e-17  & 3.2570e-09& 3.9270e-08   & 2.720614e-16 \\
&4000 &  \bf{12.81} &167.30&   43.43  &   599.55   &  1.2716e-16   & 1.9239e-08& 1.0997e-08   & 1.315645e-16 \\
&5000 &  \bf{17.62} &245.88&   83.07  &   1151.76   &  2.1056e-16   & 2.6294e-08& 3.0084e-08   & 1.324996e-16 \\
\hline
\end{tabular}

\end{table}

\begin{table}
\caption{Computational results of the first class of test problems with density=1e-4}
\begin{tabular}{c|c|c|c|c|c|c|c}
\hline
 & &\multicolumn{3}{|c|}{Time(s)}& \multicolumn{3}{|c}{Accuracy} \\
\hline
 &n & CGB & Alg1  & Alg2 & CGB & Alg1 & Alg2 \\
 \hline
 cond=10&\multicolumn{4}{|c}{}\\
 \hline
&10000 &   0.47 &  0.62&    \bf{0.43}  &   1.1725e-15  &  2.9305e-15   & 1.9469e-15\\
&20000 &   1.10 &  1.65&    1.10  &   1.6881e-15  &  7.8945e-16   & 6.9847e-16\\
Easy Case &30000 &   1.43 &  2.29&    \bf{1.34}  &   8.5874e-16  &  5.0097e-16   & 4.8177e-16\\
&40000 &   \bf{2.78} &  4.07&    2.82  &   3.9322e-16  &  8.6336e-16   & 6.7728e-16\\
&50000 &   \bf{3.53} &  5.74&    3.82  &  2.1866e-16 &  1.5472e-15   & 1.6432e-14\\
 \hline
  cond=100&\multicolumn{4}{|c}{}\\
 \hline
 &10000 &   \bf{2.79} &  3.69&    2.84  &   0.0000e+00  &  5.1216e-13   & 9.4987e-12\\
 &20000 &   \bf{4.23} &  6.40&    4.73  &   0.0000e+00  &  5.3774e-14   & 1.7939e-13\\
 Easy Case &30000 &   \bf{4.43} &  6.05&    4.52  &   0.0000e+00  &  2.2114e-14   & 7.4127e-14\\
 &40000 &   \bf{8.26} & 12.96&    8.76  &   1.6904e-16  &  2.7534e-14   & 9.8461e-14\\
&50000 &   \bf{8.70} & 11.88&    9.25  &   2.9568e-16  &  9.3602e-15   & 2.6464e-15\\
 \hline
 cond=1000&\multicolumn{4}{|c}{}\\
 \hline
       & 10000 &   \bf{8.48} & 13.33&    9.09  &   0.0000e+00  &  6.7834e-12   & 7.8038e-10\\
        &20000 &  \bf{14.55} & 23.77&   16.44  &   0.0000e+00  &  7.7513e-13   & 2.9847e-11\\
       Easy Case  &30000 &  \bf{23.90} & 40.75&   27.00  &   0.0000e+00  &  4.3943e-13   & 1.2151e-11\\
&40000 &  \bf{44.37} & 71.48&   50.39  &   0.0000e+00  &  3.7856e-13   & 5.2588e-12\\
         &50000 &  \bf{54.38} & 84.49&   63.40  &    0.0000e+00   &  2.6751e-13   & 4.2168e-12  \\
\hline
cond=10&\multicolumn{4}{|c}{}\\
\hline
&10000 &   0.88 &  0.96&    \bf{0.64}  &   5.5224e-17  &  2.2499e-14   & 2.7961e-13\\
&20000 &   1.60 &  1.58&    \bf{1.08}  &   3.6226e-16  &  5.7196e-15   & 1.0666e-14\\
Hard Case 1&30000 &   3.61 &  3.90&    \bf{2.88}  &   4.2608e-16 &  1.6371e-14   & 2.1939e-14\\
&40000 &   4.34 &  3.87&    \bf{2.40}  &   1.7872e-15  &  2.1961e-15   & 2.1956e-16\\
&50000 &   6.73 &  6.72&    \bf{5.20}  &   1.1921e-15  &  1.6576e-14   & 6.3980e-15\\
\hline
cond=100&\multicolumn{4}{|c}{}\\
\hline
&10000 &   2.91 &  4.10&    \bf{2.65}  &   0.0000e+00 &  1.0370e-12   & 3.1089e-11\\
&20000 &   4.40 &  5.36&    \bf{3.30}  &   0.0000e+00  &  1.0955e-13   & 1.4446e-11\\
Hard Case 1&30000 &  12.51 & 14.60&    \bf{7.99}  &   0.0000e+00  &  2.4296e-13   & 8.2524e-13\\
&40000 &  10.66 & 10.68&    \bf{7.37}  &   3.2416e-15  &  1.7511e-14   & 1.8957e-14\\
&50000 &  19.62 & 16.69&   \bf{11.66}  &   5.8187e-15  &  9.0692e-15   & 3.5658e-15\\
\hline
cond=1000&\multicolumn{4}{|c}{}\\
\hline
&10000 &  19.04 & 20.76&   \bf{14.89}  &   0.0000e+00  &  7.8027e-12   & 1.6214e-09\\
&20000 &  28.85 & 45.77&   \bf{27.56}  &   0.0000e+00  &  1.5992e-12   & 1.1197e-10\\
Hard Case 1&30000 &  \bf{41.50} & 64.18&   42.81  &   0.0000e+00  &  9.0035e-13   & 2.7761e-11\\
&40000 &  \bf{60.11} &106.70&   61.81  &   0.0000e+00  &  9.8912e-13   & 2.2214e-11\\
&50000 &  58.34 & 93.55&   \bf{58.01}  &   0.0000e+00  &  4.6131e-13   & 4.4259e-12\\
\hline
 cond=10&\multicolumn{4}{|c}{}\\
 \hline
            &10000 &   \bf{0.41} &    4.25  &    1.22& 0.0000e+00   &  2.3268e-13   & 5.8761e-11    \\
            &20000 &   \bf{0.76}  &    3.50  &   1.04& 0.0000e+00  &  5.8964e-14   & 7.2553e-14    \\
Hard Case 2&30000 &   \bf{1.74} &   25.12  &    9.12& 0.0000e+00   &  6.8237e-13   & 5.6535e-12    \\
            &40000 &   \bf{2.06} &   14.36  &    6.11& 0.0000e+00   &  1.3294e-13   & 3.8901e-13    \\
             &50000 &   \bf{2.78} &   14.73  &    5.20& 0.0000e+00  &  1.0006e-13   & 7.4351e-14    \\
\hline
cond=100&\multicolumn{4}{|c}{}\\
\hline
           &10000 &   \bf{1.78}&   15.16 &    3.42& 0.0000e+00   &  5.5265e-10   & 5.1118e-09    \\
           &20000 &   \bf{1.84} &   25.53  &    5.79& 0.0000e+00  &  6.0442e-12   & 9.0442e-10    \\
Hard Case 2&30000 &   \bf{2.61} &   30.48  &    7.78& 0.0000e+00   &  8.1750e-13   & 1.6605e-11    \\
            &40000 &   \bf{4.39} &   91.63  &   16.24& 0.0000e+00   &  4.7839e-12   & 1.4334e-10    \\
            &50000 &   \bf{7.59} &  136.59  &   27.00& 0.0000e+00   &  4.5553e-11   & 1.3001e-10    \\
\hline
cond=1000&\multicolumn{4}{|c}{}\\
\hline
            &10000 &   \bf{4.70} &   38.58  &    9.19& 0.0000e+00   &  1.5298e-08   & 1.2372e-07    \\
            &20000 &   \bf{7.41} &   92.02  &   20.77& 0.0000e+00   &  6.0570e-09   & 2.0386e-08    \\
Hard Case 2&30000 &  \bf{34.75} &293.60&   79.92  &   0.0000e+00  &  9.4080e-10   & 9.4997e-10\\
            &40000 &  \bf{57.08} &344.94&   70.91  &  0.0000e+00  &  1.5930e-10   & 3.6650e-09\\
           & 50000 &  \bf{53.14} &631.03&  138.77  &   0.0000e+00  &  1.0949e-10   & 6.6408e-10\\
\hline

\end{tabular}

\end{table}

\subsection{Second class of test problems}
We first generate randomly a sparse positive definite matrix $C$ and a sparse indefinite matrix $B$ via \verb|C=sprandsym(n,density,1/cond,2)| and
\verb| B = sprandsym(n,density)|. Next, we set $A=C-B$. In this case, obviously, we can set $\hat{\lambda}=1$.  We follow  Procedure 1 to generate an easy case instance but we choose  $\lambda$ uniformly from $(\underline{\lambda}, \bar{\lambda})$ and  if $s>\ell$, we choose $\beta$  randomly from $(-s,-\ell)$ else we choose $\beta $  from $ (-\ell,-s)$ where $s=x_c^TBx_c$ and $s$ is the solution of linear system $Cx=-a$. Procedure 2  and 3 are followed to generate hard case   (case 1 and 2) instances.
The numerical results are adjusted in Tables 3 and 4.

As we see in Tables 1 and 3,  for $90\% $ of the generated instances, CGB algorithm is faster than Algorithm 3.2 from \cite{qeig} while having comparable accuracy. The time difference become significant when we increase the dimension, since  Algorithm 3.2 requires computing an extremal eigenpair of an $(2n +1)  \times (2n +1)$ generalized eigenvalue problem  which is time-consuming when the involved matrices are not highly sparse.

From Tables 1, 2 , 3 and 4,  we observe that CGB algorithm computes more accurate solutions than Algorithms 1 and 2.   Moreover,  except for one dimension, it is always faster than Algorithm 1 and when we increase the dimension, the time difference become significant.  For easy case problems, CGB Algorithm is faster than Algorithm 2 for about 77\% of cases, specifically,  CGB Algorithm is more efficient than Algorithm 2 when condition number is large.   In hard Case 2,  CGB Algorithm is much more efficient than Algorithms 1 and 2 because it first checks for hard case 2, if it is the case, the optimal solution of QCQP is computed via solving  a positive definite system of linear equations.  In hard case 1,  CGB algorithm is slower than Algorithm 2 for about 60 \% of the cases   but still faster than Algorithm 1 because of the extra time it take for verifying hard case 2.

\begin{table}
\caption{Computational results of the second class of test problems with  density=1e-2}
\begin{tabular}{c|c|c|c|c|c|c|c|c|c}
\hline
  & &\multicolumn{4}{|c|}{Time(s)}& \multicolumn{4}{|c}{Accuracy} \\
  \hline
 &n & CGB & Alg1  & Alg2 & Alg3 & CGB & Alg1 & Alg2 & Alg3 \\
 \hline
 cond=10&\multicolumn{4}{|c}{}\\
 \hline
&1000 &   \bf{0.19} &  0.25&    0.22  &     0.61   &  1.8361e-16   & 2.3145e-14& 1.4939e-13   & 1.806630e-16 \\
&2000 &   \bf{0.35} &  0.48&    0.40  &     4.04   &  1.1614e-16   & 1.0821e-14& 2.3359e-14   & 1.129161e-16 \\
Easy Case &
3000&\bf{0.60} &  0.75&    1.06  &    12.71   &  1.6823e-16   & 1.8640e-15& 2.6605e-15   & 7.371001e-17\\
&4000 &   \bf{1.02} &  1.39&    1.75  &    28.59   &  7.3294e-17   & 1.4715e-15& 3.4793e-13   & 1.517022e-16 \\&5000 &   3.02 &  3.77&    \bf{2.34}  &   325.90   &  6.2226e-16   & 2.7151e-16& 4.2262e-16   & 6.980376e-17 \\
 \hline
 cond=100&\multicolumn{4}{|c}{}\\
 \hline
 &1000 &   \bf{0.48} &  1.02&    0.71  &     0.62   &  5.2875e-16   & 2.6805e-12& 4.0940e-11   & 1.715593e-15 \\
&2000 &    \bf{0.91} &  1.51&    1.10  &     3.78   & 3.0586e-16  & 2.2825e-13& 1.5995e-12   & 0 \\
Easy Case&3000 &    \bf{1.92} &  2.55&    2.00  &    13.62   &  2.3203e-15   & 2.7241e-13& 1.1253e-12   & 1.952349e-16 \\
&4000 &   \bf{2.87} &  4.06&    3.20  &    36.20   &  1.0388e-15   & 2.5531e-13& 1.2496e-12   &6.361166e-16 \\
&5000 &   \bf{4.14} &  5.14&    4.32  &   234.46   &  1.1003e-15   & 1.0004e-13& 3.9197e-13   & 1.497476e-16 \\
\hline
 cond=1000&\multicolumn{4}{|c}{}\\
 \hline
&1000 &   1.85 &  3.25&    2.02  &     \bf{0.55}   &  2.3202e-16   & 6.2333e-11& 1.3809e-04   & 3.307807e-16 \\
&2000 &   5.42 &  8.53&    6.08  &     \bf{4.80}   & 5.8536e-17   & 2.1502e-12& 2.3546e-10   & 3.576414e-16 \\
Easy Case&3000 &  \bf{12.98} & 19.32&   13.94  &    14.35   &  8.4864e-17  & 1.3869e-12& 4.7013e-11   & 1.324596e-16 \\
&4000 & \bf{10.09} & 22.45&   11.43  &    38.59   &  1.4693e-16   & 7.1405e-13& 8.7914e-11   & 4.308794e-17\\
&5000 &  \bf{19.00} & 30.42&   22.90  &   385.47   &  1.0816e-16   & 3.9543e-13& 6.7996e-12   & 1.596002e-16 \\
\hline
cond=10&\multicolumn{4}{|c}{}\\
 \hline
&1000 &   0.44 &  0.47&    \bf{0.34}  &     0.80   &  7.6776e-17   & 3.4128e-14& 1.3917e-13   & 2.167390e-16 \\
&2000 &   0.65 &  0.88&    \bf{0.60}  &     5.25   &  3.2993e-17   & 7.8537e-14& 3.1866e-12   & 1.040181e-16 \\
Hard Case 1&3000 &   1.42 &  1.45&    \bf{1.13}  &    15.14   &  9.2671e-17   & 1.2647e-14& 2.0693e-09   & 6.589870e-17 \\
&4000 &   1.70 &  1.22&    \bf{1.02}  &    35.27   &  7.2135e-17   & 1.7775e-15& 2.4309e-15   & 8.207300e-17 \\
&5000 &   6.44 &  6.86&    \bf{4.31}  &   401.36   &  2.2950e-16   & 1.5855e-14& 2.0637e-13   & 2.502850e-17 \\
\hline
cond=100&\multicolumn{4}{|c}{}\\
 \hline
&1000 &   0.75 &  0.99&    0.65  &     \bf{0.57}   &  1.2815e-16   & 1.3903e-12& 1.3202e-10   & 4.722266e-17 \\
&2000 &   1.33 &  1.95&    \bf{1.16}  &     4.42   &  5.7397e-17   & 5.4877e-13& 1.6352e-11   & 1.161957e-16 \\
Hard Case 1&3000 &   4.05 &  3.24&    \bf{2.72}  &    13.22   &  7.5077e-17   & 1.1536e-13& 2.6095e-12   & 2.778583e-17 \\
&4000 &   5.65 & 11.97&    \bf{5.54}  &    40.99   &  9.2310e-17   & 7.0747e-13& 5.3772e-11   & 4.250603e-17 \\
&5000 &  11.21 & 10.75&    \bf{7.45}  &   332.81   &  2.0919e-16   & 3.5848e-14& 6.5684e-13   &  2.048992e-17 \\
\hline
 cond=1000&\multicolumn{4}{|c}{}\\
 \hline
&1000 &   2.34 &  4.41&    1.99  &     \bf{0.78}   &  4.8791e-17   & 1.0518e-11& 1.6066e-06   & 1.053864e-16 \\
&2000 &   4.40 & 10.19&    4.57  &    \bf{4.36}   &  1.1762e-16   & 4.4629e-12& 4.1546e-10   & 4.459648e-17 \\
Hard Case 1&3000 &  15.41 & 28.43&   \bf{12.78}  &    14.62   &  1.1966e-13   & 3.0246e-12& 2.3939e-10   & 8.079233e-17\\
&4000 &  13.81 & 33.07&   \bf{12.23}  &    27.22   &  7.3406e-15   & 2.0796e-12& 7.9221e-10   & 4.955689e-17 \\
&5000 &  \bf{34.44} &151.09&   45.20  &   263.30   &  3.1287e-15   & 1.2173e-09& 1.4141e-08   & 4.955689e-17 \\
\hline
 cond=10&\multicolumn{4}{|c}{}\\
\hline
&1000 &   \bf{0.33} &  0.86&    0.36  &     3.24   &  0.0000e+00  & 3.8490e-13& 1.8209e-12   & 6.857462e-16 \\
&2000 &   \bf{0.99} &  8.64&    2.41  &    85.23   &  0.0000e+00   & 6.4555e-13& 1.8722e-11   & 3.651448e-16 \\
Hard Case 2& 3000 &   \bf{1.10} &  4.00&    1.83  &    65.52   &  0.0000e+00   & 7.5183e-14& 3.9460e-13   & 6.287905e-17 \\
&4000 &   \bf{1.88} & 16.77&    3.24  &   487.32   &  3.4636e-16   & 1.3285e-13& 4.2813e-12   & 0.0000e+00 \\
&5000 &   \bf{2.45} & 20.90&    6.64  &   909.38   &  5.4219e-16   & 6.9743e-13& 3.0042e-11   & 0.0000e+00 \\
\hline
 cond=100&\multicolumn{4}{|c}{}\\
\hline
&1000 &   \bf{0.53} &  2.39&    1.05  &     1.32   &  2.6963e-16   & 4.9522e-12& 2.9811e-10   & 4.770832e-16 \\
&2000 &   \bf{1.00} &  9.40&    2.90  &    28.23   &  4.0417e-16   & 6.6325e-12& 5.2118e-10   & 1.133704e-16 \\
Hard Case 2& 3000 &   \bf{2.87} & 58.94&    9.74  &   107.78   &  2.9495e-16   & 5.7992e-12& 9.2886e-10   & 0.0000e+00 \\
&4000 &   \bf{4.92} & 31.57&    8.35  &   147.59   &  2.9235e-16   & 1.0990e-12& 1.7460e-10   & 0.0000e+00\\
&5000 &   \bf{7.96} &160.04&   48.60  &   1308.83   &  0.0000e+00   & 2.9739e-12& 4.4620e-10   & 2.581425e-16 \\

\hline
 cond=1000&\multicolumn{4}{|c}{}\\
\hline
&1000 &   \bf{2.68} & 14.00&    6.01  &     4.06   &  2.9054e-16   & 8.7283e-08& 1.5660e-07   & 0 \\
&2000 &   \bf{5.68} & 31.84&   12.25  &    18.86   &  2.5527e-16   & 1.2073e-08& 8.0504e-08   & 6.035149e-17 \\
Hard Case 2&3000 &  \bf{12.38} & 94.31&   29.17  &    53.89   &  2.2569e-16   & 5.4429e-09& 1.0046e-07   & 1.214486e-16 \\
&4000 &  \bf{10.80} &152.22&   29.82  &   174.29   &  6.0252e-17   & 5.2876e-09& 2.9837e-08   & 5.656610e-17 \\
&5000 &  \bf{48.30} &276.64&   70.15  &   449.92   &  2.0865e-16   & 1.8989e-08& 3.4265e-08   & 2.959448e-16 \\
\hline
\end{tabular}

\end{table}

\begin{table}
\caption{Computational results of the second class of test problems with density=1e-4}
\begin{tabular}{c|c|c|c|c|c|c|c}
\hline
 & &\multicolumn{3}{|c|}{Time(s)}& \multicolumn{3}{|c}{Accuracy} \\
\hline
 &n & CGB & Alg1  & Alg2 & CGB & Alg1 & Alg2 \\
 \hline
cond=10&\multicolumn{4}{|c}{}\\
\hline
&10000 &   0.71 &  0.84&    0.71  &   2.9910e-17   & 2.5167e-15& 6.1729e-15   \\
&20000 &   0.90 &  1.47&    \bf{0.87}  &   2.9690e-16   & 1.0642e-15& 2.6536e-15   \\
Easy Case &30000 &   \bf{1.83} &  2.72&    2.27  &   3.6312e-16   & 1.5595e-15& 2.8782e-15   \\
&40000 &   \bf{2.96} &  4.45&    3.10  &   3.0619e-16   & 1.0188e-15& 6.8731e-16   \\
&50000 &   \bf{5.74} &  8.34&    6.34  &   3.5265e-16   & 9.1985e-16& 1.9643e-15   \\
\hline
cond=100&\multicolumn{4}{|c}{}\\
\hline
&10000 &   1.66 &  2.16&    \bf{1.62}  &   0.0000e+00   & 5.1889e-13& 4.9398e-12   \\
&20000 &   \bf{2.99} &  5.67&    3.15  &   0.0000e+00   & 2.0833e-13& 7.7965e-12   \\
Easy Case &30000 &   \bf{9.62} & 13.68&    9.71  &   0.0000e+00   & 4.1471e-14& 2.4518e-13   \\
&40000 &  \bf{12.88} & 17.67&   13.24  &   0.0000e+00   & 3.4451e-14& 6.7671e-13   \\
&50000 &  13.67 & 20.62&   \bf{13.65}  &   1.6836e-16   & 1.2681e-14& 1.3899e-14   \\

\hline
cond=1000&\multicolumn{4}{|c}{}\\
 \hline
 &10000 &  \bf{12.93} & 21.78&   12.98  &   0.0000e+00   & 2.5847e-11& 4.4340e-09   \\
 &20000 &  \bf{15.48} & 25.58&   17.00  &   0.0000e+00  & 1.0184e-12& 9.1688e-11   \\

 Easy case& 30000 &  \bf{21.88} & 41.75&   27.57  &   0.0000e+00   & 1.4239e-12& 5.7425e-11   \\
&40000 &  \bf{47.51} & 73.44&   54.22  &   0.0000e+00  & 4.7311e-13& 1.3561e-11   \\
&50000 &  \bf{58.08} & 98.08&   64.23  &   0.0000e+00   & 5.0026e-13& 1.1820e-11   \\

 \hline
 cond=10&\multicolumn{4}{|c}{}\\
 \hline
&10000 &   0.83 &  0.72&    \bf{0.50}  &   4.5626e-17   & 2.3592e-14& 3.3500e-14   \\
&20000 &   \bf{1.10} &  1.78&    1.18  &   2.7397e-16   & 8.3330e-15& 1.7622e-14   \\
Hard Case 1&30000 &   3.15 &  2.30&    \bf{1.57}  &   3.5323e-16   & 1.3166e-15& 4.6729e-16   \\
&40000 &   6.33 &  4.28&    \bf{3.24}  &   3.8399e-16   & 1.1407e-15& 8.2119e-16   \\
&50000 &   7.81 &  4.57&    \bf{4.61}  &   5.5852e-16   & 3.1130e-15& 5.4845e-15   \\
\hline
 cond=100&\multicolumn{4}{|c}{}\\
 \hline
 &10000 &   3.26 &  \bf{3.15}&    3.87  &   0.0000e+00   & 1.7803e-13& 4.1942e-12  \\
&20000 &   \bf{3.17} &  6.12&    3.35  &   0.0000e+00   & 1.5832e-13& 2.0746e-12   \\
Hard Case 1&30000 &   \bf{6.13} & 10.74&    6.38  &   0.0000e+00   & 2.1378e-13& 5.6080e-13   \\
&40000 &  13.63 & 14.87&   \bf{11.50}  &   0.0000e+00   & 6.9402e-14& 2.0909e-13    \\
&50000 &  18.12 & 16.61&   \bf{10.89}  &   1.7662e-17   & 3.3103e-14& 1.2959e-14     \\
\hline
cond=1000&\multicolumn{4}{|c}{}\\
 \hline
 &10000 &  13.76 & 26.63&   \bf{12.61}  &   0.0000e+00  & 1.5568e-11& 5.4200e-09   \\
&20000 &  \bf{15.46} & 27.88&   16.49  &   0.0000e+00   & 1.3622e-06& 1.3623e-06   \\
Hard Case 1& 30000 &  \bf{48.37} &102.01&   49.81  &   0.0000e+00   & 2.1438e-12& 7.6499e-11   \\
&40000 &  65.58 &105.23&   \bf{62.84}  &   0.0000e+00  & 7.4480e-13& 6.1595e-11   \\
&50000 &  80.09 &104.03&   \bf{61.66}  &   0.0000e+00  & 2.5470e-13& 5.6306e-12    \\
\hline
 cond=10&\multicolumn{4}{|c}{}\\
 \hline
&10000 &   \bf{0.77} &    2.02  &     0.98   &  0.0000e+00   & 6.3351e-14& 2.0440e-11   \\
&20000 &   \bf{1.04} &    7.51  &     1.68   &  0.0000e+00   & 2.1772e-13& 7.9143e-12   \\
Hard Case 2&30000 &   \bf{1.87} &    5.80  &     2.63   &  9.0727e-17   & 5.3708e-14& 6.4019e-14   \\
&40000 &   \bf{3.08} &   26.16  &     11.26   &  0.0000e+00  & 5.2256e-13& 3.7264e-12   \\
&50000 &   \bf{4.66} &   22.71  &     11.17   &  0.0000e+00  & 2.2534e-13& 6.8245e-13   \\
 \hline
 cond=100&\multicolumn{4}{|c}{}\\
 \hline
 &10000 &   \bf{2.50} & 16.85&    5.48  &   0.0000e+00   & 1.6785e-11& 1.2714e-08   \\
 &20000 &   \bf{3.11} &   27.54  &     6.57   &  0.0000e+00   & 1.5268e-12& 1.8101e-10   \\
Hard Case 2&30000 &   \bf{8.96}  &   75.56  &    16.39   &  0.0000e+00   & 8.8052e-12& 2.0078e-10   \\
&40000 &   \bf{9.56}&   89.83  &    20.02   &  0.0000e+00   & 1.6188e-12& 3.3824e-10   \\
&50000 &   \bf{18.37} &  173.57  &    48.50  & 0.0000e+00   & 5.3902e-12& 7.1115e-11   \\

\hline
 cond=1000&\multicolumn{4}{|c}{}\\
 \hline
& 10000 &   \bf{18.21} &   56.16  &    24.47   &  0.0000e+00  & 3.1452e-08& 9.3311e-08   \\
&20000 &   \bf{19.76} &  124.17  &    38.12   &  0.0000e+00 & 7.5228e-09& 2.1199e-08   \\
Hard Case 2&30000 &  \bf{23.91} &  222.98  &    37.13   &  0.0000e+00   & 1.4684e-09& 4.4186e-09   \\
&40000 &  \bf{46.86}  &  521.06  &    92.09   &  0.0000e+00  & 2.8725e-09& 6.5494e-09   \\
&50000 &  \bf{35.38}&  626.99  &   91.11   &  0.0000e+00   & 5.2262e-10& 4.7002e-09   \\
\hline

\end{tabular}

\end{table}
\section{Conclusions}
In this paper, we have considered the problem of minimizing a general quadratic function subject to one general quadratic constraint.  A conjugate gradient-based algorithm is introduced to  solve the problem which is based on solving a sequence of positive definite system of linear equations by conjugate gradient method. Our computational experiments
on several randomly generated test problems show that the proposed method is efficient for large sparse instances since
 the most expensive operations at  each iteration  is
several matrix vector products, that are cheap when the matrix is sparse.

\section{Acknowledgement}The authors would like to thank Iran National Science Foundation (INSF) for supporting this research under project no. 95843381.

\end{document}